\newtheorem{theorem}{Theorem}[section]
\newtheorem{proposition}[theorem]{Proposition}
\newtheorem{corollary}[theorem]{Corollary}
\newtheorem{remark}{Remark}
\newcommand{\MW}{Milnor-Witt\ }
\newcommand{\rMW}{\mathrm{MW}}
\newcommand{\KM}{\mathrm{K}^\mathrm{M}}
\newcommand{\KMW}{\mathrm{K}^\mathrm{MW}}
\newcommand{\tbb}[1]{\widetilde{\mathbb{#1}}}
\newcommand{\wt}[1]{\widetilde{#1}}
\newcommand{\Spec}{\mathrm{Spec}\ }
\newcommand{\af}{\mathbb{A}}
\newcommand{\afnz}[1]{\mathbb{A}^{#1}\setminus \{0\}}
\newcommand{\tbZ}{\tbb{Z}}
\newcommand{\Gm}{\mathbb{G}_m}
\newcommand{\GL}{\mathrm{GL}}
\newcommand{\SH}{\mathcal{SH}}
\newcommand{\DMt}{\widetilde{\mathrm{DM}}}
\newcommand{\Mt}{\widetilde{\mathrm{M}}}
\newcommand{\HH}{\mathrm{H}_{\rMW}}
\newcommand{\HM}{\mathrm{H}_{\mathrm{M}}}
\newcommand{\etaHM}{ {^\eta\mathrm{H}_{\mathrm{M}}} }
\newcommand{\Heta}{\mathrm{H}_{\eta}}
\newcommand{\rHom}{\mathrm{Hom}}
\newcommand{\xr}[1]{\xrightarrow{#1}}
\title{\MW motivic decomposition of Stiefel varieties}
\author{Keyao Peng}
\date{}
\begin{document}
\maketitle

\section{Introduction}

This paper extends the research presented in \cite{peng2023milnorwitt}, which investigated the cohomology groups $\Heta^*(V_k(\af^n))$ with $\eta$-inverted coefficients by employing the $\af^1$-Leray spectral sequence \cite{asok2018homotopy}. For foundational results and notations relevant to this study, we refer the reader to \cite{peng2023milnorwitt}.

In this work, we will initially compute the integral MW-motivic cohomology groups $\HH^{p,q}(V_k(\af^n))$ associated with Stiefel varieties.

\begin{theorem}[{Theorem \ref{mainVk}}]
  Let $ i \in N_{n,k} $. The generators $ \alpha_{i} $ are elements of $ \HH^{2i+1,i+1}(V_k(\af^n)) $, and for $ J \in F_{n,k} $, the generators $ \beta_J $ belong to $ \HH^{d(J)}(V_k(\af^n)) $. For any subset $ I \subset N_{n,k} $, define 
  \[
  b_{I} = \prod_{J \in F_{n,k}, J \subset I} \beta_J \prod_{i \in I_T} \alpha_i \in \HH^{d(I)}(V_k(\af^n)).
  \]
  Then, we establish the following isomorphism of graded $ \HH^{*,*}(K) $-modules:
  \[
   \HH^{*,*}(V_k(\af^n)) \cong \bigoplus_{I \subset N_{n,k}} ( \HH )_I b_I,
  \]
  where
  \[
    ( \HH )_I= \begin{cases}
      \HH^{*,*}(K), & \text{if } I = I_F, \\
      \HM^{*,*}(K), & \text{if } \mathrm{Max}(I_T) \text{ is even}, \\
      \etaHM^{*,*}(K), & \text{if } \mathrm{Max}(I_T) \text{ is odd}. \\
    \end{cases}
  \]
\end{theorem}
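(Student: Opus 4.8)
The plan is to induct on $k$ using the sphere-bundle tower of the Stiefel variety. Projection onto the first $k-1$ vectors realizes $V_k(\af^n)$ as an $\afnz{n-k+1}$-bundle over $V_{k-1}(\af^n)$, and iterating exhibits $V_k(\af^n)$ as a tower of fibrations whose successive fibers are the motivic spheres $\afnz{n-k+1},\dots,\afnz{n}$. Each such fiber $\afnz{m}$ is $\af^1$-equivalent to a motivic sphere, so its reduced cohomology is a free rank-one $\HH^{*,*}(K)$-module generated by a fundamental class in bidegree $(2m-1,m)$. Pulling these fundamental classes back through the tower produces exactly the generators $\alpha_i$ of bidegree $(2i+1,i+1)$ with $i=m-1\in N_{n,k}$, explaining their stated position.

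First I would compute the $E_2$-page of the associated $\af^1$-Leray spectral sequence of \cite{asok2018homotopy} for the top fibration: its fiber contribution is the free rank-one module just described, and its base contribution is known by the inductive hypothesis. Because the base is cellular and the fiber is a single motivic sphere, I expect only one potential (transgression) differential at each stage, and I would argue it vanishes, so that as a $\HH^{*,*}(K)$-module the cohomology is the tensor product of base and fiber contributions. Iterating yields a $\HH^{*,*}(K)$-basis indexed by subsets $I\subset N_{n,k}$, the basis element being the product of the corresponding $\alpha_i$; the classes $\beta_J$ for $J\in F_{n,k}$ then appear as the images of those products $\alpha_i\alpha_{i+1}$ of adjacent generators that recombine under the \MW multiplication, and the attendant bookkeeping is exactly the splitting $I=I_F\sqcup I_T$ and the definition of $b_I$.

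The heart of the argument, and the step I expect to be the main obstacle, is to identify the $\HH^{*,*}(K)$-module type of each summand $(\HH)_I\,b_I$, i.e.\ the trichotomy $\HH/\HM/\etaHM$. Every edge of the tower is a sphere bundle with fiber $\afnz{m}$, $m=i+1$, whose Gysin sequence is governed by an \MW Euler class, so the torsion created in the total space is controlled by the divisibility of that Euler class; the decisive input is that the parity of $m$, equivalently of $i=\mathrm{Max}(I_T)$, dictates whether $\eta$ or the hyperbolic element $h=1+\langle-1\rangle$ annihilates $b_I$. When $\mathrm{Max}(I_T)$ is even I would show $b_I$ is $\eta$-torsion, so its coefficient module is $\KMW/\eta\cong\HM^{*,*}(K)$; when $\mathrm{Max}(I_T)$ is odd I would show $b_I$ is $h$-torsion, so $\eta$ acts invertibly and the coefficient is the Witt-theoretic $\etaHM^{*,*}(K)$; and when $I=I_F$ no torsion generator is present and the full $\HH^{*,*}(K)$ survives. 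I would deduce these annihilations from the $\epsilon$-graded commutativity of the \MW product together with the fundamental exact sequences relating $\KMW_*$ to Milnor $K$-theory and to the Witt ring, which reduces every remaining extension and multiplicativity check to an explicit computation in $\KMW_*(K)$.
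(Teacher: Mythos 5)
Your overall strategy (induction on $k$ along the tower of sphere bundles, using the $\af^1$-Thom--Gysin/Leray sequence) is the same as the paper's, but the pivotal step is wrong. You assert that the single transgression differential at each stage vanishes, so that the cohomology is the tensor product of the base and fiber contributions; this would make $\HH^{*,*}(V_k(\af^n))$ a free $\HH^{*,*}(K)$-module, contradicting the very statement you are proving (which contains $\HM$ and $\etaHM$ summands). The transgression is cup product with the Euler class $e(f_{n,k})\in\HH^{2(n-k),n-k}(V_k(\af^n))$, and the paper shows (Corollary \ref{eulerVk}, via coconnectedness of $\HH$ together with a pullback comparison to the known $k=1$ case along a fiber embedding) that $e(f_{n,k})=\eta\beta_{n-k}\neq 0$ whenever $n-k$ is even. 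It is precisely this nonvanishing, combined with $\eta\beta_{n-k}^2=0$ (Corollary \ref{etaSqZero}), that turns the Gysin sequence into a short exact sequence between the cokernel and kernel of $\cup\,\eta\beta_{n-k}$ and thereby produces the $\mathrm{Coker}_\eta=\HM^{*,*}(K)$ and $\mathrm{Ker}_\eta=\etaHM^{*,*}(K)$ coefficients. Your later paragraph, in which torsion is ``controlled by the divisibility of that Euler class,'' contradicts your vanishing claim; and its details are also off: when $\mathrm{Max}(I_T)$ is odd the coefficient $\etaHM^{*,*}(K)$ is the \emph{kernel} of $\eta$ (equal to $2\KM_j(K)$ on the diagonal), on which $\eta$ acts by zero --- it is not a Witt-theoretic module on which $\eta$ acts invertibly, so ``$h$-torsion with $\eta$ invertible'' is not the right characterization.

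Even after correcting the differential, two steps you omit would still be needed. First, knowing the kernel and cokernel of $\cup\, e(f_{n,k})$ only yields a short exact sequence; you must prove it splits as graded $\HH^{*,*}(K)$-modules, which the paper does by comparison with the corresponding sequence in ordinary motivic cohomology. Second, the induction closes up only because of the identities $\mathrm{Coker}_{\eta}(\HH^{*,*}(K))=\HM^{*,*}(K)$, $\mathrm{Ker}_{\eta}(\HH^{*,*}(K))=\etaHM^{*,*}(K)$, and the fact that $\HM^{*,*}(K)$ and $\etaHM^{*,*}(K)$ are each preserved by taking further kernels and cokernels of $\eta$; these need to be stated and verified, not merely the $\KMW$-level exact sequence you allude to.
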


It is important to note that our computations focus solely on the additive structures; the ring structures are considerably more complex.

Building upon these computations, we will proceed to establish the following MW-motive decomposition:

\begin{theorem}[{Theorem \ref{mwDecompVk}}]
Let $C_\eta(S)$ denote the cone of $\Mt(S) \xr{\eta}  \Mt(S)(1)[1]$ and define the following:

\begin{eqnarray*}
\wt{\mathbf{HS}}_{2k} & := & \Mt(\afnz{2k}) \cong \Mt(S) \oplus \Mt(S)(2k)[4k-1], \\
\wt{\mathbf{HS}}_{2k+1} & := & \Mt(S) \oplus C_\eta(S)(2k)[4k] \oplus \Mt(S)(4k+1)[8k].
\end{eqnarray*}

Then, for any $i, j \in \mathbb{N}$, we obtain an MW-motivic decomposition of the following forms:

\[
\Mt(V_{2j}(\af^{2i})) \cong \wt{\mathbf{HS}}_{2i} \otimes  \wt{\mathbf{HS}}_{2i-1} \otimes \wt{\mathbf{HS}}_{2i-3} \otimes \ldots \otimes \wt{\mathbf{HS}}_{2i-1-2(j-2)} \otimes \Mt(\afnz{2i+1-2j}),
\]

\[
\Mt(V_{2j+1}(\af^{2i})) \cong \wt{\mathbf{HS}}_{2i} \otimes \wt{\mathbf{HS}}_{2i-1} \otimes \wt{\mathbf{HS}}_{2i-3} \otimes \ldots \otimes \wt{\mathbf{HS}}_{2i-1-2(j-1)},
\]

\[
\Mt(V_{2j}(\af^{2i+1})) \cong \wt{\mathbf{HS}}_{2i+1} \otimes \wt{\mathbf{HS}}_{2i-1} \otimes \ldots \otimes \wt{\mathbf{HS}}_{2i+1-2(j-1)},
\]

\[
\Mt(V_{2j+1}(\af^{2i+1})) \cong \wt{\mathbf{HS}}_{2i+1} \otimes \wt{\mathbf{HS}}_{2i-1} \otimes \ldots \otimes \wt{\mathbf{HS}}_{2i+1-2(j-1)} \otimes \Mt(\afnz{2i+1-2j}).
\]
\end{theorem}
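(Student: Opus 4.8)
The plan is to induct on the number of frames $k$, peeling spheres off the top of the Stiefel filtration by means of the fibration
\[
V_{k-1}(\af^{n-1}) \to V_k(\af^n) \to \afnz{n},
\]
which sends a frame to its first vector and has fibre the $(k-1)$-frames in $\af^n/\langle v_1\rangle\cong\af^{n-1}$. Applying $\Mt(-)$ and using that $\afnz{n}$ has the two-cell motive $\Mt(S)\oplus\Mt(S)(n)[2n-1]$, this produces a distinguished triangle relating $\Mt(V_k(\af^n))$ to $\Mt(V_{k-1}(\af^{n-1}))$ and its Tate twist. The four displayed formulas are then governed by a single parity dichotomy in the ambient dimension $n$: when $n$ is even the triangle splits and contributes a free factor $\wt{\mathbf{HS}}_n=\Mt(\afnz{n})$, while when $n$ is odd the new top sphere $\afnz{n}$ is glued to the even sphere $\afnz{n-1}$ sitting just below it, the two combining into the block $\wt{\mathbf{HS}}_n$. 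Iterating and reading off how many spheres remain unpaired at the bottom reproduces exactly the stated tensor products, the trailing factor $\Mt(\afnz{2i+1-2j})$ appearing precisely when the number of spheres is odd.

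First I would settle the even case. Here the connecting map of the triangle is controlled by the Milnor--Witt Euler class of the sphere bundle over $\afnz{2m}$; since the real realisation of $\afnz{2m}$ is the odd sphere $S^{2m-1}$, this obstruction vanishes, the triangle splits, and the monoidal structure of $\Mt(-)$ turns the split bundle over the motivic sphere $\afnz{2m}$ into a tensor product
\[
\Mt(V_k(\af^{2m}))\cong\Mt(\afnz{2m})\otimes\Mt(V_{k-1}(\af^{2m-1}))=\wt{\mathbf{HS}}_{2m}\otimes\Mt(V_{k-1}(\af^{2m-1})).
\]
This produces the leading factor in each formula and reduces the even-ambient problem to an odd-ambient one.

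The odd case is the heart of the argument and, I expect, the main obstacle. Over $\afnz{2m+1}$ the bundle no longer splits: the real realisation of the base is now the even sphere $S^{2m}$, whose nonzero Euler characteristic is refined, in the Milnor--Witt world, to the Hopf map. Concretely I would show that the relevant component of the connecting map is, up to a unit, multiplication by $\eta$, linking the bottom cell of the new sphere to the top cell of the adjacent $\afnz{2m}$. Coning off this $\eta$ is exactly what manufactures the summand $C_\eta(S)(2m)[4m]$, so that the two-sphere contribution assembles into $\wt{\mathbf{HS}}_{2m+1}=\Mt(S)\oplus C_\eta(S)(2m)[4m]\oplus\Mt(S)(4m+1)[8m]$; the base case of this computation is the identification $\Mt(V_2(\af^{2m+1}))\cong\wt{\mathbf{HS}}_{2m+1}$. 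Pinning down the connecting map as $\eta$ (rather than as a split map, as in the even case) is where the quadratic refinement genuinely enters, and it is the step I expect to be the most delicate.

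To rigidify the induction and to verify that the factors assemble in the asserted order, I would invoke the cohomological computation of Theorem \ref{mainVk}. All the varieties in play are cellular Milnor--Witt motives, so a map inducing an isomorphism on $\HH^{*,*}$ is an isomorphism of motives; it therefore suffices to produce the comparison morphism realising the generators $b_I$ and to check it on cohomology. The three coefficient rings of Theorem \ref{mainVk} match the cohomologies of the three kinds of cells appearing in the $\wt{\mathbf{HS}}$-blocks: the free summands $\Mt(S)$ give the factor $\HH^{*,*}(K)$ (the case $I=I_F$), the cone $C_\eta$ contributes ordinary motivic cohomology $\HM^{*,*}(K)$ (the case $\mathrm{Max}(I_T)$ even), and its $\eta$-inverted shadow contributes $\etaHM^{*,*}(K)$ (the case $\mathrm{Max}(I_T)$ odd). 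Matching these module structures term by term both confirms that the odd connecting map is $\eta$ and upgrades the additive isomorphism of Theorem \ref{mainVk} to the claimed decomposition of motives; the combinatorial bookkeeping of which spheres pair and which remain free is precisely what is encoded by the index sets $N_{n,k}$ and $F_{n,k}$.
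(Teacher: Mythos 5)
Your overall strategy coincides with the paper's: induct on $k$ using the cofiber sequence $V_{k-1}(\af^{n-1})_+ \to V_k(\af^n)_+ \to \afnz{n}\wedge V_{k-1}(\af^{n-1})_+$, split the triangle when $n$ is even, identify the attaching map with $\eta$ when $n$ is odd so that consecutive spheres assemble into $\wt{\mathbf{HS}}_{2m+1}\cong\Mt(V_2(\af^{2m+1}))$, and justify the resulting identification by comparison with the cohomological computation. That is exactly the paper's proof of Theorem \ref{mwDecompVk}.

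There is, however, one concrete under-specified step. You propose to "produce the comparison morphism realising the generators $b_I$ and to check it on cohomology," invoking only Theorem \ref{mainVk}. But the generators $b_I$ are classes in $\HH^{*,*}(V_k(\af^n))=\rHom(\Mt(V_k(\af^n)),\tbZ(*)[*])$, so they only supply the components of a comparison map landing in the Tate summands of the target $Y_{k,n}$. The target also contains summands of the form $C_\eta(q)[p]$ (and, in the inductive step for $n$ odd, the splitting projection onto $C_{\eta}\otimes \Mt(V_{k-2}(\af^{n-2}))(n-1)[2n-2]$ must be constructed), and a map into these is an element of $\rHom(\Mt(V_k(\af^n)),C_\eta(q)[p])$, which cannot be read off from $\HH^{*,*}$ alone. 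This is precisely why the paper upgrades Theorem \ref{mainVk} to Theorem \ref{EmainVk}, valid for an arbitrary coconnected MW-motive $\mathbb{E}$ --- in particular $\mathbb{E}=C_\eta$ --- by observing that the inductive computation only used coconnectedness; this yields $\rHom(\Mt(V_k(\af^n)),Y_{k,n})\cong\rHom(Y_{k,n},Y_{k,n})$ and hence a canonical candidate $\iota_{k,n}$ together with the needed splitting projections. Your closing "cellular plus cohomology isomorphism implies motive isomorphism" principle is salvageable for dualizable cellular objects by a duality argument, but without the $C_\eta$-coefficient computation you have no map to apply it to; supplying that coefficient-generalization is the missing ingredient.
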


\subsection*{Notation}

Let $C_\eta(S)$ denote the cone of $\Mt(S) \xr{\eta}  \Mt(S)(1)[1]$. We sometimes use the index of degree $( p, \{q\} ) := (p+q ,q)$. A cohomology theory $\mathbb{E} \in \SH(K)$ is termed \textbf{coconnected} if $\mathbb{E}^{p, \{q\}}(K) = 0$ for $p > 0$.

\section{The $\af^1$-Thom-Gysin Sequence}

We will follow the computation from \cite{vcadek2003cohomology} to determine the total cohomology group $\HH^{*,*}(V_k(\af^n), \mathbb{Z})$ with integral coefficients as a module. The key element in this determination is the $\af^1$-Thom-Gysin sequence, which directly results from the $\af^1$-Leray spectral sequence.

\begin{theorem}[{\cite[Thm 5.2.8]{asok2018homotopy}}]
Let $f: X \to B$ be a Zariski locally trivial smooth fibration over a field $K$, where the fiber $F$ of $f$ is an $\af^1$-homology sphere (i.e., $\Sigma^r F \cong \Sigma^p \Gm^{\wedge q}$). Let $M$ be a homotopy module with a ring structure over $K$, and assume that $H^i_{\delta}(f_* M_X)$ is $K$-simple over $B$. Then there exists an Euler class $e(f) \in H^{p-r+1}(B, M_q)$, such that the Thom-Gysin sequence is given by:

\[
\cdots \to H^i(B, M_j) \to H^i(X, M_j) \to H^{i+r-p}(B, M_{j-q}) \xrightarrow{ \cup e(f)} H^{i+1}(B,M_j) \to \cdots
\]

where the connecting morphism is given by the product with $e(f)$.
\end{theorem}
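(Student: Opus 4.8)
The plan is to realize the Thom--Gysin sequence as the long exact sequence of an $\af^1$-Leray spectral sequence that collapses onto two rows. First I would invoke the spectral sequence of \cite{asok2018homotopy} attached to $f\colon X\to B$ with coefficients in the homotopy module $M$,
\[
E_2^{s,t}=H^s\!\bigl(B,\ H^t_\delta(f_*M_X)\bigr)\ \Longrightarrow\ H^{s+t}(X,M),
\]
so that the entire problem reduces to controlling the $E_2$-page. This is precisely the point at which the two standing hypotheses are used.

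Second, I would read off the fiberwise cohomology from the sphere condition $\Sigma^rF\cong\Sigma^p\Gm^{\wedge q}$. Destabilizing, $F$ has the $\af^1$-homology of the motivic sphere $S^{p-r}\wedge\Gm^{\wedge q}$, so its reduced cohomology with $M$-coefficients is concentrated in a single top cell, which carries a simplicial-degree shift of $p-r$ together with a $\Gm^{\wedge q}$-twist; the unreduced cohomology therefore lives in exactly two fiber degrees, $0$ and $p-r$. The $K$-simplicity of $H^i_\delta(f_*M_X)$ is what forbids any nontrivial monodromy over $B$, so each of the two surviving sheaves is the constant (Tate-twisted) one, and the $E_2$-page has precisely two nonzero rows,
\[
E_2^{s,0}=H^s(B,M_j),\qquad E_2^{s,\,p-r}=H^s(B,M_{j-q}),
\]
the upper row being contracted in weight by $q$ on account of the $\Gm^{\wedge q}$-factor.

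Third, with only two nonzero rows every differential vanishes except the single one crossing between them, namely $d_{p-r+1}\colon E^{s,\,p-r}\to E^{s+p-r+1,\,0}$. A two-row spectral sequence with one differential unwinds into a long exact sequence; after the substitution $s=i+r-p$ this becomes
\[
\cdots\to H^i(B,M_j)\to H^i(X,M_j)\to H^{i+r-p}(B,M_{j-q})\xr{\ d_{p-r+1}\ }H^{i+1}(B,M_j)\to\cdots,
\]
which is the asserted sequence up to naming the connecting map.

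Finally, I would identify $d_{p-r+1}$ with cup product by an Euler class. Here the ring structure on $M$ is indispensable: it makes the $\af^1$-Leray spectral sequence multiplicative, so the differential is a derivation. Writing $u$ for the generator of the top row (the image of the fiber fundamental class) and setting $e(f):=d_{p-r+1}(u)$, the bidegree bookkeeping above places $e(f)\in H^{p-r+1}(B,M_q)$; since every element of the top row has the form $x\cdot u$ with $x$ a permanent cycle of the bottom row, the Leibniz rule gives $d_{p-r+1}(x\cdot u)=x\cup e(f)$, so the connecting map is $\cup\,e(f)$ as claimed. I expect the genuine obstacle to lie in steps two and four: correctly tracking the simultaneous simplicial shift $p-r$ and weight contraction $q$ that the motivic sphere $\Sigma^p\Gm^{\wedge q}$ imposes on the coefficient module, and verifying that the multiplicative structure of this spectral sequence is robust enough in the $\af^1$-setting to pin the differential down to cup product with a single well-defined class $e(f)$.
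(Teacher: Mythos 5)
Your outline is correct, but note that the paper does not actually prove this statement: it is imported verbatim as \cite[Thm 5.2.8]{asok2018homotopy}, and the two-row $\af^1$-Leray spectral sequence argument you describe is essentially the proof given in that source (the paper itself remarks that the sequence ``directly results from the $\af^1$-Leray spectral sequence''). Your steps are the standard ones and they do work: the sphere condition $\Sigma^r F \cong \Sigma^p \Gm^{\wedge q}$ concentrates the reduced fiber cohomology in simplicial degree $p-r$ with a weight contraction by $q$, the $K$-simplicity hypothesis kills the monodromy so both rows are constant, the two-row collapse unwinds to the long exact sequence, and multiplicativity plus the Leibniz rule identifies the transgression with $\cup\, e(f)$ for $e(f):=d_{p-r+1}(u)\in H^{p-r+1}(B,M_q)$. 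Where the paper does supply a proof is for the generalization stated immediately afterwards (the $\af^1$-Thom--Gysin sequence for a ring spectrum $\mathbb{E}\in\SH(S)$ and an $\mathbb{E}$-oriented vector bundle), and there it takes a genuinely different and more elementary route: the long exact sequence of the cofiber sequence $X\to V\to \mathrm{Th}(V)$, the Thom isomorphism $\phi(a)=\mathrm{Th}(f)^*(a)\cup\tau$, and the definition $e(f):=(f^*)^{-1}q^*(\tau)$, so that the connecting map is computed by a one-line manipulation rather than by a Leibniz argument on a spectral sequence. The trade-off is the one you would expect: your spectral-sequence argument applies to arbitrary Zariski locally trivial fibrations whose fiber is an $\af^1$-homology sphere, at the cost of having to verify multiplicativity of the $\af^1$-Leray spectral sequence (the point you rightly flag as delicate); the paper's Thom-space argument avoids spectral sequences entirely but only covers sphere bundles arising as complements of zero sections of oriented vector bundles, which is all that is needed for the Stiefel fibrations $\afnz{n-k}\to V_{k+1}(\af^n)\to V_k(\af^n)$ studied here.
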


In fact, we can generalize this to an Atiyah-Hirzebruch type spectral sequence by replacing $M$ with a spectrum $\mathbb{E} \in \SH(S)$, where $S$ is a Noetherian base scheme over the field $K$.

\begin{proposition}[{$\af^1$-Thom-Gysin sequence}]
  
  Let $\mathbb{E} \in \SH(S)$ be a ring spectrum, and let $f: V \to B$ be a rank-$n$ vector bundle that is $\mathbb{E}$-oriented. Let $X := V \setminus B$ denote the complement of the zero section, which constitutes a sphere bundle. Then, we acquire the Thom-Gysin exact sequence:
  \[
    \cdots \to \mathbb{E}^{i,j}(B) \to \mathbb{E}^{i,j}(X) \xrightarrow{r} \mathbb{E}^{i-(2n-1),j-n}(B) \xrightarrow{\cup e(f)} \mathbb{E}^{i+1,j}(B) \to \cdots
  \] 

  where $e(f) \in \mathbb{E}^{2n,n}(B)$ represents the Euler class of the vector bundle.
\end{proposition}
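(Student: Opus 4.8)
The plan is to derive the sequence from the localization cofiber sequence of the pair $(V,X)$, combined with the Thom isomorphism furnished by the $\mathbb{E}$-orientation and with $\af^1$-homotopy invariance. Write $z\colon B \to V$ for the zero section and $j\colon X = V\setminus B \hookrightarrow V$ for the open complement. The closed immersion $z$ together with its open complement gives, by the Morel--Voevodsky homotopy purity theorem, a cofiber sequence in the pointed $\af^1$-homotopy category
\[
  X_+ \to V_+ \to V/X \simeq \mathrm{Th}(V),
\]
where the last term is identified with the Thom space of $V$ because the normal bundle of the zero section is $V$ itself.

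First I would apply the cohomology theory represented by $\mathbb{E}$ to this cofiber sequence, producing the long exact sequence
\[
  \cdots \to \widetilde{\mathbb{E}}^{i,j}(\mathrm{Th}(V)) \to \mathbb{E}^{i,j}(V) \to \mathbb{E}^{i,j}(X) \xr{\partial} \widetilde{\mathbb{E}}^{i+1,j}(\mathrm{Th}(V)) \to \cdots,
\]
where $\widetilde{\mathbb{E}}$ denotes reduced cohomology. Two identifications then reshape this into the claimed form. By $\af^1$-homotopy invariance the projection $f$ is an equivalence on $\mathbb{E}$-cohomology, so $\mathbb{E}^{i,j}(V)\cong \mathbb{E}^{i,j}(B)$, and the map $\mathbb{E}^{i,j}(B)\cong \mathbb{E}^{i,j}(V)\to \mathbb{E}^{i,j}(X)$ becomes restriction along $X\hookrightarrow V$. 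By the $\mathbb{E}$-orientation there is a Thom class $u\in \widetilde{\mathbb{E}}^{2n,n}(\mathrm{Th}(V))$ whose cup product yields the Thom isomorphism $\mathbb{E}^{i,j}(B)\xrightarrow{\ \cong\ }\widetilde{\mathbb{E}}^{i+2n,j+n}(\mathrm{Th}(V))$, equivalently $\widetilde{\mathbb{E}}^{i+1,j}(\mathrm{Th}(V))\cong \mathbb{E}^{i+1-2n,j-n}(B)=\mathbb{E}^{i-(2n-1),j-n}(B)$. Substituting both identifications turns the long exact sequence into exactly the Thom--Gysin sequence displayed in the statement, with $r$ the composite of $\partial$ with the inverse Thom isomorphism, and the degree bookkeeping $(2n,n)$ matching the shift $i\mapsto i-(2n-1)$, $j\mapsto j-n$.

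The step that needs genuine care is the identification of the remaining connecting map $\widetilde{\mathbb{E}}^{i+1,j}(\mathrm{Th}(V))\to \mathbb{E}^{i+1,j}(V)\cong \mathbb{E}^{i+1,j}(B)$ with cup product by the Euler class $e(f)$. I would argue as follows: this map is induced by the collapse $V_+\to \mathrm{Th}(V)$, and under the Thom isomorphism a class corresponds to $f^*(x)\cup u$; its image in $\mathbb{E}^{*,*}(V)$ is obtained by restricting along the zero section, so by the projection formula it equals $x\cup z^*(u)$. Since by definition the Euler class is $e(f)=z^*(u)\in \mathbb{E}^{2n,n}(B)$, the connecting map is $x\mapsto x\cup e(f)$, as required. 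The main obstacle is thus not the formal exactness but the bookkeeping around multiplicativity: one must check that the orientation really produces a Thom isomorphism for a general ring spectrum $\mathbb{E}$ (using its module structure and the naturality of $u$), and that the projection formula and the restriction-to-zero-section are compatibly normalized, so that the connecting map is genuinely multiplication by $e(f)$ rather than by a unit multiple of it.
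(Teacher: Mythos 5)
Your proposal is correct and follows essentially the same route as the paper: both apply $\mathbb{E}$-cohomology to the cofiber sequence $X \to V \to \mathrm{Th}(V)$, use homotopy invariance and the Thom isomorphism to rewrite the terms, and identify the connecting map as cup product with the Euler class via multiplicativity (your $z^*(u)$ coincides with the paper's $(f^*)^{-1}q^*(\tau)$ since $f\circ z=\mathrm{id}_B$). The only cosmetic difference is that you phrase the final identification through the zero section and the projection formula, whereas the paper pushes everything through $q^*$ directly.
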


\begin{proof}
  
  Since $f: V \to B$ is oriented with respect to $\mathbb{E}$, we have the Thom isomorphism $\phi: \mathbb{E}^{i-2n,j-n}(B) \xrightarrow{\cong} \mathbb{E}^{i,j}(\mathrm{Th}(V))$, where $\mathrm{Th}(V) := V/X$ is the Thom space. Let $\tau = \phi(1) \in \mathbb{E}^{2n,n}(\mathrm{Th}(V))$ denote the Thom class. Furthermore, there exists a homotopy equivalence $f^*: \mathbb{E}^{i,j}(B) \xrightarrow{\cong} \mathbb{E}^{i,j}(V)$. Let the morphism $\mathrm{Th}(f)^*: \mathbb{E}^{i,j}(B) \xrightarrow{\cong} \mathbb{E}^{i,j}(\mathrm{Th}(V))$ be induced by the projection. We note that for the Thom isomorphism, $\phi(a) = \mathrm{Th}(f)^*(a) \cup \tau$. 

Now, consider the Gysin sequences associated with the cofiber sequence $X \to V \xrightarrow{q} \mathrm{Th}(V)$:
% https://q.uiver.app/#q=WzAsOCxbMCwwLCJcXG1hdGhiYntFfV57aSxqfShWKSJdLFsxLDAsIlxcbWF0aGJie0V9XntpLGp9KFgpIl0sWzIsMCwiXFxtYXRoYmJ7RX1ee2krMSxqfShcXG1hdGhybXtUaH0oVikpIl0sWzMsMCwiXFxtYXRoYmJ7RX1ee2krMSxqfShWKSJdLFswLDEsIlxcbWF0aGJie0V9XntpLGp9KEIpIl0sWzIsMSwiXFxtYXRoYmJ7RX1ee2ktKDJuLTEpLGotbn0oQikiXSxbMSwxLCJcXG1hdGhiYntFfV57aSxqfShYKSJdLFszLDEsIlxcbWF0aGJie0V9XntpKzEsan0oQikiXSxbMCwxXSxbMSwyXSxbMiwzLCJxXioiXSxbNCwwLCJwXioiLDJdLFs1LDIsIlxccGhpIiwyXSxbNiwxLCIiLDAseyJsZXZlbCI6Miwic3R5bGUiOnsiaGVhZCI6eyJuYW1lIjoibm9uZSJ9fX1dLFs3LDMsInBeKiIsMl0sWzQsNl0sWzYsNV0sWzUsNywiXFxwYXJ0aWFsIl1d
\[\begin{tikzcd}
    {\mathbb{E}^{i,j}(V)} & {\mathbb{E}^{i,j}(X)} & {\mathbb{E}^{i+1,j}(\mathrm{Th}(V))} & {\mathbb{E}^{i+1,j}(V)} \\
    {\mathbb{E}^{i,j}(B)} & {\mathbb{E}^{i,j}(X)} & {\mathbb{E}^{i-(2n-1),j-n}(B)} & {\mathbb{E}^{i+1,j}(B)}
    \arrow[from=1-1, to=1-2]
    \arrow[from=1-2, to=1-3]
    \arrow["{q^*}", from=1-3, to=1-4]
    \arrow["{f^*}"', from=2-1, to=1-1]
    \arrow[from=2-1, to=2-2]
    \arrow[Rightarrow, no head, from=2-2, to=1-2]
    \arrow[from=2-2, to=2-3]
    \arrow["\phi"', from=2-3, to=1-3]
    \arrow["\partial", from=2-3, to=2-4]
    \arrow["{f^*}"', from=2-4, to=1-4]
\end{tikzcd}\]

We only need to determine the morphism $\partial := (f^*)^{-1} q^* \phi$. By the properties of the Thom isomorphism, we have 

\[
\partial(a) = (f^*)^{-1} q^*(\mathrm{Th}(f)^*(a) \cup \tau) = (f^*)^{-1}(f^*(a) \cup q^*(\tau)) = a \cup (f^*)^{-1} q^*(\tau).
\]
By the definition of the Euler class, $ e(f) := (f^*)^{-1} q^*(\tau) $, we can conclude our proof.

\end{proof}

Pursuing the diagram further, we can derive the following corollary:

\begin{corollary}
  \label{fibEmb}
  We maintain the previously established notation. Let $ g: \afnz{n} \to X $ be a fiber embedding induced by the base point $ b: pt \to B $. Then the pullback $ g^*: \mathbb{E}^{i,j}(X) \to \mathbb{E}^{i,j}(\afnz{n}) $ is defined as follows:
  \[
    \mathbb{E}^{i,j}(X) \xrightarrow{r} \mathbb{E}^{i-(2n-1),j-n}(B) \xrightarrow{b^*} \mathbb{E}^{i-(2n-1),j-n}(pt) \xrightarrow{\cup \theta} \mathbb{E}^{i,j}(\afnz{n}),
  \]
  where $ \theta \in \mathbb{E}^{2n-1,n}(\afnz{n}) $ serves as a generator.
\end{corollary}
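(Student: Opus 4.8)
The plan is to derive the corollary from the naturality of the Thom-Gysin sequence under the base change along $b : pt \to B$, combined with an explicit description of the residue map for the fiber.

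First I would restrict the Gysin data of the Proposition along $b$. Pulling back $V \to B$ to the point yields the trivial bundle $\af^n \to pt$, whose zero-section complement is precisely the fiber $\afnz{n}$ of $X \to B$ over $b$, included via $g$, and whose Thom space is the motivic sphere $\mathrm{Th}(\af^n) \cong S^{2n,n}$. The fiber inclusion $g$, the induced map on total spaces, and the induced map on Thom spaces assemble into a morphism of cofiber sequences from $\afnz{n} \to \af^n \to \mathrm{Th}(\af^n)$ to $X \to V \to \mathrm{Th}(V)$. Since the Thom class is natural — the Thom class $\tau$ of $V$ restricts along $b$ to the Thom class of the trivial bundle — and the Thom isomorphism $\phi$ commutes with restriction, the two Gysin sequences fit into a commuting ladder. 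Extracting the relevant square yields the naturality identity $r' \circ g^* = b^* \circ r$, where $r'$ is the residue map of the fiber.

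Second, I would compute the fiber's Gysin sequence on the nose. The trivial bundle $\af^n \to pt$ admits a nowhere-vanishing section, so its Euler class vanishes (equivalently, $\af^n$ is $\af^1$-contractible, so the collapse $\af^n \to \mathrm{Th}(\af^n)$ is null on reduced cohomology and $q^*\tau = 0$); hence the connecting maps $\cup\, e'$ are zero, the sequence breaks into short exact sequences, and $r'$ is surjective. Normalizing the generator $\theta \in \mathbb{E}^{2n-1,n}(\afnz{n})$ by $r'(\theta) = 1$, and using that $r'$ is a module map over $\mathbb{E}^{*,*}(pt)$, I obtain $r'(a \cup \theta) = a$, so that $\cup\, \theta$ is a section of $r'$. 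Combining the two steps, for $y \in \mathbb{E}^{i,j}(X)$ one has $r'(g^*(y)) = b^*(r(y)) = r'\!\big(b^*(r(y)) \cup \theta\big)$, which identifies $g^*$ with the asserted composite up to the kernel of $r'$.

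The main obstacle is precisely this kernel. By the split short exact sequence it equals $\mathrm{im}\big(\pi^* : \mathbb{E}^{*,*}(pt) \to \mathbb{E}^{*,*}(\afnz{n})\big)$, the classes pulled back through the structure map $\pi$, which is nonzero in general; thus the naturality identity a priori determines only the $\theta$-component of $g^*$. Using the commuting square $g^* \circ f^* = \pi^* \circ b^*$ one sees that the complementary $\pi^*$-component of $g^*(y)$ is exactly the restriction to the fiber of the base-pullback part of $y$, so the displayed composite computes $g^*$ modulo the image of the base. Controlling this term — which in the coconnected bidegrees governing the Stiefel-variety computation is forced to vanish — together with verifying the naturality of the Thom-class restriction, is where the genuine work of the proof lies.
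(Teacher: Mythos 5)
Your argument---restricting the Gysin ladder along $b$ via naturality of the Thom class, using the vanishing of the Euler class of the trivial bundle $\af^n \to pt$ to split the fiber's sequence, and normalizing $\theta$ so that $\cup\,\theta$ sections the residue map $r'$---is exactly the diagram chase the paper intends, since the paper gives no written proof beyond the phrase ``pursuing the diagram further.'' Your closing caveat is also well taken and is a point the paper glosses over: the displayed composite only determines $g^*$ modulo $\ker(r') = \mathrm{im}\bigl(\pi^*\colon \mathbb{E}^{*,*}(pt) \to \mathbb{E}^{*,*}(\afnz{n})\bigr)$ (indeed the stated formula fails verbatim on the unit class, which restricts to $1$ but has $r = 0$), although in the bidegree where the corollary is actually invoked in Corollary~\ref{eulerVk} the coconnectedness of $\HH$ forces that complementary component to vanish, so the application is unaffected.
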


\section{MW-Motivic Cohomology Groups of $ V_k(\af^n) $}

We apply this theorem to the fiber sequences of Stiefel varieties over the field $ K $:

\[
\afnz{n-k} \to V_{k+1}(\af^n) \xrightarrow{f_{n,k}} V_{k}(\af^{n}).
\]

The fiber $ \afnz{n-k} \cong \Sigma^{n-k-1} \Gm^{ \wedge n-k} $ is precisely an $ \af^1 $-sphere and corresponds to a $ \HH $-oriented bundle, which is, in fact, $ \GL $-oriented as shown in \cite[\S 3.6, \S 3.7]{peng2023milnorwitt}. The Thom-Gysin sequence for MW motivic cohomology $ \HH^{i,j}(-) := \HH^{i,j}(-,\mathbb{Z}) $ is expressed as follows:

\[
\cdots \xrightarrow{\cup e(f_{n,k})} \HH^{i,j}( V_{k}(\af^n)) \to \HH^{i,j}( V_{k+1}(\af^n)) \to \HH^{i-(2(n-k)-1),j-(n-k)}( V_{k}(\af^n)) \xrightarrow{\cup e(f_{n,k})} \HH^{i+1,j}( V_{k}(\af^n)) \to \cdots.
\]

Consequently, we derive the short exact sequence:

\begin{equation}
  \label{exactVk}
  0 \to \mathrm{Coker}(\cup e(f_{n,k})) \to \HH^{i,j} \to \mathrm{Ker}(\cup e(f_{n,k})) \to 0.
\end{equation}

To carry out calculations, it suffices to understand the Euler class $ e(f_{n,k}) \in \HH^{2(n-k),n-k}(V_{k}(\af^n)) $.

We begin by reviewing some facts about the cone $ C_{\eta}(K) \in \DMt(K) $, which is associated with the morphism $ \Mt(K)(1)[1] \xrightarrow{\eta} \Mt(K) \to C_{\eta}(K) $.

\begin{proposition}[{\cite[Prop 2.2]{yang2022split}}]
   The following exact sequence splits:
   \[
   \HH^{i-2,j-1}(K) \to \HH^{i,j}(C_{\eta}(K)) \to \HH^{i,j}(K) \xrightarrow{\eta} 
   \]
   \[
   \HH^{i,j}(C_{\eta}(K)) = \begin{cases}
      \HM^{i-2,j-1}(K) \oplus \HM^{i,j}(K) , & \text{if } i \neq j \text{ or } i=j<0 ,\\
      \HM^{j-2,j-1}(K) \oplus 2 \KM_j(K) , & \text{if } i=j \geq 0\\
   \end{cases}
   \]
   Here, $ \HM^{i,j}(-) := \HM^{i,j}(-,\mathbb{Z}) $ denotes the motivic cohomology as defined by Voevodsky \cite{mazza2006lecture}.
\end{proposition}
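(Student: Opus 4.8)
The plan is to read off the stated groups from the long exact sequence attached to the defining triangle $\Mt(K)(1)[1]\xr{\eta}\Mt(K)\to C_{\eta}(K)\to\Mt(K)(1)[2]$. Applying the representable functor $\HH^{i,j}(-)=\mathrm{Hom}_{\DMt(K)}(-,\tbZ(j)[i])$ and using $\HH^{i,j}(\Mt(K)(1)[1])=\HH^{i-1,j-1}(K)$ together with $\HH^{i,j}(\Mt(K)(1)[2])=\HH^{i-2,j-1}(K)$ produces precisely the four-term sequence in the statement. Splicing two consecutive connecting maps then yields the short exact sequence
\[
0\to\mathrm{coker}\bigl(\HH^{i-1,j}(K)\xr{\eta}\HH^{i-2,j-1}(K)\bigr)\to\HH^{i,j}(C_{\eta}(K))\to\ker\bigl(\HH^{i,j}(K)\xr{\eta}\HH^{i-1,j-1}(K)\bigr)\to0,
\]
so the entire computation reduces to understanding multiplication by $\eta$ on the MW-motivic cohomology of the point.

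For this I would invoke the known structure of $\HH^{*,*}(K)$ (as recalled in \cite{peng2023milnorwitt}): on the diagonal $\HH^{n,n}(K)=\KMW_n(K)$, while off the diagonal the canonical comparison map identifies $\HH^{i,j}(K)\cong\HM^{i,j}(K)$, on which $\eta$ acts by zero since motivic cohomology is orientable. This gives at once $\ker(\eta)=\HM^{i,j}(K)$ for $i\neq j$. For the cokernel term, the target $\HH^{i-2,j-1}(K)$ lies off the diagonal whenever $i\neq j+1$, where $\eta$ vanishes and the cokernel is the full group $\HM^{i-2,j-1}(K)$; in the remaining case $i=j+1$ both source and target are diagonal and, using the fiber-product presentation $\KMW_j=\KM_j\times_{\KM_j/2}I^j$ and the identity $\KMW_*/\eta=\KM_*$, one computes $\mathrm{coker}(\eta\colon\KMW_j\to\KMW_{j-1})=\KM_{j-1}(K)=\HM^{j-1,j-1}(K)$, again equal to $\HM^{i-2,j-1}(K)$. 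On the diagonal proper ($i=j$) the same presentation shows $\ker(\eta\colon\KMW_j\to\KMW_{j-1})=2\KM_j(K)$ for $j\ge0$ and that $\eta$ is an isomorphism for $j<0$. Feeding these kernels and cokernels into the short exact sequence reproduces the groups in the statement case by case, the dichotomy $i=j\ge0$ versus ($i\neq j$ or $i=j<0$) reflecting exactly whether the diagonal kernel $2\KM_j(K)$ contributes.

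It remains to split the extension, and this is the step I expect to be the main obstacle. The cleanest route I would pursue uses the canonical map $\Mt(K)\xr{c}C_{\eta}(K)$ — equivalently the identification of the cone with the motivic coefficient object $C_{\eta}(K)\simeq\bZ$ — together with the comparison morphism $\tbZ\to\bZ$ from MW-motivic to ordinary motivic cohomology, which should furnish a natural retraction of the surjection $\HH^{i,j}(C_{\eta}(K))\to\ker(\eta)$ and split the sequence functorially. The delicate point is the diagonal case $i=j\ge0$: there the quotient $2\KM_j(K)$ need not be free, so the splitting cannot be deduced on formal grounds and one must genuinely produce the section, checking that the map built from the comparison morphism is compatible with the fiber-product description of $\KMW_j$; alternatively one shows directly that the relevant extension class in $\mathrm{Ext}^1$ vanishes. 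Once the splitting is in hand, the identification of each summand with $\HM$ or with $2\KM_j(K)$ is exactly the kernel/cokernel computation above.
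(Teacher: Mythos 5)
First, a point of order: the paper does not prove this proposition at all --- it is imported verbatim from the cited source (Yang, Prop.~2.2), so there is no in-paper proof to compare against. Judged on its own terms, the first half of your proposal is essentially right: applying $\rHom_{\DMt(K)}(-,\tbZ(j)[i])$ to the defining triangle of $C_\eta(K)$ does yield the four-term sequence in the statement, the resulting short exact sequence $0\to\mathrm{coker}(\eta)\to\HH^{i,j}(C_\eta(K))\to\ker(\eta)\to 0$ is the correct reduction, and your case analysis of $\ker(\eta)$ and $\mathrm{coker}(\eta)$ reproduces exactly the sequence $0\to 2\KM_j\to\KMW_j\xr{\eta}\KMW_{j-1}\to\KM_{j-1}\to 0$ and the definition of $\etaHM$ that the paper records immediately after the proposition. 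One caveat: your justification that $\eta$ acts by zero off the diagonal ``since motivic cohomology is orientable'' is circular as stated. You need the identification $\HH^{i,j}(K)\cong\HM^{i,j}(K)$ for $i\neq j$ \emph{as an identification of $\HH^{*,*}(K)$-modules}, i.e.\ as a statement about the $\eta$-action, and that requires the actual D\'eglise--Fasel computation of MW-motivic cohomology of the point (including the boundary line $i=j-1$, where the cohomology of the sheaves $I^q$ intervenes), not merely an abstract isomorphism of groups plus orientability of the target theory.

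The genuine gap is the splitting, which is the entire content of the cited result. Your proposed mechanism --- the comparison morphism $\tbZ\to\bZ$ together with $C_\eta(K)\simeq\bZ$ --- is a reasonable candidate: composing with it gives a map $\HH^{i,j}(C_\eta(K))\to\HM^{i,j}(C_\eta(K))\cong\HM^{i,j}(K)\oplus\HM^{i-2,j-1}(K)$ (the cone splits in $\DM$ because $\eta=0$ there), and projecting to the second factor could retract the subgroup $\mathrm{coker}(\eta)\cong\HM^{i-2,j-1}(K)$. But you never verify the required compatibility (that this projection restricts to an isomorphism on the subgroup, which amounts to knowing that $\HH^{i-2,j-1}(K)\to\HM^{i-2,j-1}(K)$ is surjective with kernel exactly the image of $\eta$), and your phrasing --- a ``retraction of the surjection,'' a section one ``must genuinely produce,'' or an $\mathrm{Ext}^1$-class one ``shows directly'' vanishes --- is a statement of intent rather than an argument. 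Since neither $\HM^{i-2,j-1}(K)$ nor $2\KM_j(K)$ is free, no formal argument applies, and in the critical diagonal case $i=j\ge 0$ the extension is exactly where a nontrivial class could live. As it stands your proposal computes the associated graded of $\HH^{i,j}(C_\eta(K))$, not the direct sum decomposition; to close it you would need to carry out the construction you sketch (or an explicit idempotent in $\mathrm{End}(C_\eta(K))$), which is precisely what the cited paper supplies.
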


We also note the following exact sequence of abelian groups:
\[ 
  0 \to 2\KM_j(K) \cong {^\eta \KMW_j(K)} \to \KMW_j(K) \xrightarrow{\eta} \KMW_{j-1}(K) \to \KM_{j-1}(K) \to 0.
\]
This motivates us to define a modified cohomology group:
\[
  \etaHM^{i,j}(K) = \begin{cases}
    \HM^{i,j}(K), & \text{if } i \neq j,\\
    2\KM_j(K), & \text{if } i=j.
  \end{cases}
\]  

Consequently, we have the following exact sequence of the graded cohomology group $ \HH^{*,*}(K) = \bigoplus_{i,j} \HH^{*,*}(K) $:
\[
  0 \to \etaHM^{*,*}(K) \to \HH^{*,*}(K) \xrightarrow{\eta} \HH^{*-1,*-1}(K) \to \HM^{*-1,*-1}(K) \to 0.
\]

Additionally, we have $ \HH^{*,*}(C_{\eta}(K)) = \HM^{*,*}(K) \theta_{2,1} \oplus \etaHM^{*,*}(K) $.

To compute the cohomology of $ V_2(\af^{n}) $, we utilize the following theorem:

\begin{proposition}[{\cite[Cor 3.9]{peng2023milnorwitt}}]
  \label{mwDecompV2} 
  The following motivic decompositions are valid in $ \DMt(S) $:
  \[ \Mt(V_2(\af^{n})) \cong \Mt(\afnz{n}) \otimes \Mt(\afnz{n-1}) \] for $ n $ even, and
  \[ \Mt(V_2(\af^{n})) \cong \wt{\mathbf{HS}}_{n} \] for $ n $ odd.
\end{proposition}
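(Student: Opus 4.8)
The plan is to feed the basic fibration $\afnz{n-1} \to V_2(\af^n) \xr{f_{n,1}} \afnz{n}$ into the $\af^1$-Thom--Gysin sequence. The projection $V_2(\af^n) \to V_1(\af^n) = \afnz{n}$ realizes $V_2(\af^n)$ as the sphere bundle of the rank-$(n-1)$ quotient $Q := \cO^n/\cO$, where $\cO \hookrightarrow \cO^n$ is the tautological line spanned by the first frame vector (sliding $v_2$ along $\langle v_1\rangle$ identifies each fiber with $(\af^n/\langle v_1\rangle)\nonZero$). Since $Q$ is $\GL$-oriented, the Thom--Gysin triangle reads
\[
\Mt(V_2(\af^n)) \to \Mt(\afnz{n}) \xr{\cup e} \Mt(\afnz{n})(n-1)[2n-2] \to \Mt(V_2(\af^n))[1],
\]
so $\Mt(V_2(\af^n))$ is the fiber of the Euler-class map $\cup e$, and everything reduces to identifying this map after the splitting $\Mt(\afnz{n}) \cong \Mt(S) \oplus \Mt(S)(n)[2n-1]$.

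Next I would expand $\cup e$ into its four matrix components relative to this splitting of source and target. Writing $A = \Mt(S)$, $B = \Mt(S)(n)[2n-1]$ and $A' = \Mt(S)(n-1)[2n-2]$, $B' = \Mt(S)(2n-1)[4n-3]$, each component is a morphism in $\DMt(S)$ computed by some $\HH^{p,q}(S)$. Three of the four land strictly above the diagonal ($p>q$) and hence vanish; the only surviving component is
\[
B \to A' : \ \Mt(S)(n)[2n-1] \to \Mt(S)(n-1)[2n-2],
\]
which lies in $\rHom(\Mt(S)(1)[1], \Mt(S)) = \HH^{-1,-1}(S) = \KMW_{-1}(K) = W(K)$, the rank-one module generated by the Hopf map $\eta$. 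Thus $\cup e$ is pinned down by a single coefficient $c \in W(K)$, acting as $c\cdot\eta$ on this component and zero elsewhere.

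The crux, and the expected main obstacle, is to compute $c$. The underlying motivic (Chern) Euler class of $Q$ vanishes: Whitney multiplicativity applied to $0 \to \cO \to \cO^n \to Q \to 0$ forces $c_{n-1}(Q)=0$, so $c$ is a purely quadratic invariant living in the Witt summand and invisible to any realization. I would identify $c$ with the $\af^1$-Euler number of $Q$ -- equivalently the quadratically refined Euler characteristic governing the fiber sphere $\afnz{n-1}$ -- and compute it to be the image in $W(K)$ of $n_\epsilon = \sum_{i=1}^{n}\aBra{-1}^{i-1}$. Since $\aBra{1} + \aBra{-1} = h = 0$ in $W(K)$, consecutive terms cancel, giving $c = 0$ for $n$ even and $c = \aBra{1}$, a unit, for $n$ odd. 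Establishing this value rigorously (the genuinely quadratic heart of the statement) is where the real work lies.

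Finally I would read off the two cases. For $n$ even, $c=0$ makes $\cup e$ the zero map, the triangle splits, and
\[
\Mt(V_2(\af^n)) \cong \Mt(\afnz{n}) \oplus \Mt(\afnz{n})(n-1)[2n-3] \cong \Mt(\afnz{n}) \otimes \Mt(\afnz{n-1}),
\]
using $\Mt(\afnz{n-1}) \cong \Mt(S) \oplus \Mt(S)(n-1)[2n-3]$. For $n$ odd, $c$ is a unit, so on passing to fibers the summands $A$ and $B'[-1]$ split off as $\Mt(S)$ and $\Mt(S)(2n-1)[4n-4]$, while the two ``middle'' cells $B$ and $A'$ are linked by $\eta$; their extension is, by construction, the appropriate twist of the cone $C_\eta(S)$. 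Assembling the three pieces yields $\wt{\mathbf{HS}}_n$, as claimed.
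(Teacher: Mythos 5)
The paper offers no proof of this proposition: it is imported verbatim from \cite[Cor 3.9]{peng2023milnorwitt}, so there is no internal argument to compare yours against. That said, your reduction is the natural one and almost certainly the intended one: realizing $V_2(\af^n)$ as the sphere bundle of the rank-$(n-1)$ quotient bundle $Q=\cO^n/\cO$ over $\afnz{n}$, splitting $\Mt(\afnz{n})\cong \Mt(S)\oplus\Mt(S)(n)[2n-1]$, and using coconnectivity of $\HH^{*,*}(S)$ to kill three of the four matrix entries of the Euler-class map are all correct, as is the final assembly from the value of the surviving entry $c\in \KMW_{-1}(K)\cong W(K)$ (a triangle whose middle map vanishes splits; a unit multiple of $\eta$ glues the two middle cells into the relevant twist of $C_\eta$).

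The genuine gap is exactly where you flag it: the value of $c$. You assert $c=n_{\epsilon}$ but the justifications you sketch cannot produce it. Whitney multiplicativity applied to $0\to\cO\to\cO^n\to Q\to 0$ only controls the image of $e(Q)$ in ordinary motivic cohomology, i.e.\ in $\mathrm{CH}^{n-1}(\afnz{n})$, which vanishes for trivial reasons; it is silent about the component in $W(K)\cdot\eta\beta_{n-1}$, which is where the entire content of the dichotomy lives. Likewise, ``the $\af^1$-Euler number of $Q$'' is not defined in this situation ($Q$ has rank $n-1$ over a base of dimension $n$, and its restriction to a point is trivial), so identifying $c$ with a quadratic Euler characteristic of the fiber sphere is a heuristic borrowed from the topological picture $V_2(\mathbb{R}^n)\simeq S(TS^{n-1})$, not an argument. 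The equality $e(f_{n,1})=n_{\epsilon}\eta\beta_{n-1}$ is precisely \cite[Prop 3.8]{peng2023milnorwitt} (quoted in the proof of Corollary \ref{eulerVk} above), and it does not follow formally from the Thom--Gysin setup; it requires an actual computation of the Euler class of $Q$. As written, your argument is therefore a correct and clean reduction of the proposition to that input, rather than a self-contained proof.
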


\begin{corollary}
  We have an isomorphism of graded $ \HH^{*,*}(K) $-modules:
\[
\HH^{*,*}(V_2(\af^n)) = \begin{cases}
    \HH^{*,*}(K)[\beta_{n-1}, \beta_{n-2}] / (\beta_{n-1}^2, \beta_{n-2}^2), & \text{if } n \text{ is even}, \\
    \HH^{*,*}(K) \beta_{n-1,n-2} \oplus \HM^{*,*}(K) \alpha_{n-1} \oplus \etaHM^{*,*}(K) \alpha_{n-2} \oplus \HH^{*,*}(K), & \text{if } n \text{ is odd}, \\
\end{cases}
\]
where $\mathrm{deg}(\beta_{i}) = \mathrm{deg}(\alpha_{i}) = (2i+1, i+1) = (i, \{i+1\})$, meaning that $\alpha_{i} \in \HH^{2i+1,i+1}(V_2(\af^n))$, and $\mathrm{deg}(\beta_{n-1,n-2}) = (4n-4, 2n-1) = (2n-3, \{2n-1\})$.

\end{corollary}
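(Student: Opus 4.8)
The plan is to read off both cases by applying $\HH^{*,*}(-)$ directly to the motivic decompositions of Proposition \ref{mwDecompV2}. The computation is purely formal once we use three inputs: that $\HH^{*,*}(-)$ carries a finite direct sum of motives to the direct sum of their cohomologies; that $\HH^{*,*}(\Mt(S)(a)[b]) = \HH^{*-b,*-a}(K)$ as a free rank-one $\HH^{*,*}(K)$-module generated in bidegree $(b,a)$; and the value $\HH^{*,*}(C_\eta(K)) = \HM^{*,*}(K)\theta_{2,1} \oplus \etaHM^{*,*}(K)$ recorded above. No K\"unneth/Tor correction intervenes, since every summand appearing is a Tate twist of $\Mt(S)$ or a twist of $C_\eta(S)$.

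For $n$ even, I begin from $\Mt(V_2(\af^n)) \cong \Mt(\afnz{n}) \otimes \Mt(\afnz{n-1})$ and expand each factor via $\Mt(\afnz{m}) \cong \Mt(S) \oplus \Mt(S)(m)[2m-1]$, obtaining the four-term sum $\Mt(S) \oplus \Mt(S)(n)[2n-1] \oplus \Mt(S)(n-1)[2n-3] \oplus \Mt(S)(2n-1)[4n-4]$. Applying $\HH^{*,*}(-)$ yields four free rank-one $\HH^{*,*}(K)$-modules with generators in bidegrees $(0,0)$, $(2n-1,n)$, $(2n-3,n-1)$ and $(4n-4,2n-1)$, which I identify with $1$, $\beta_{n-1}$, $\beta_{n-2}$ and $\beta_{n-1}\beta_{n-2}$ using $\deg\beta_i=(2i+1,i+1)$. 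As a graded module this is the free module on these four generators, i.e.\ $\HH^{*,*}(K)[\beta_{n-1},\beta_{n-2}]/(\beta_{n-1}^2,\beta_{n-2}^2)$.

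For $n=2k+1$ odd I apply $\HH^{*,*}(-)$ to $\wt{\mathbf{HS}}_n$ one summand at a time. The terms $\Mt(S)$ and $\Mt(S)(4k+1)[8k]$ give the free summands $\HH^{*,*}(K)$ and $\HH^{*,*}(K)\beta_{n-1,n-2}$, the latter generated in bidegree $(8k,4k+1)=(4n-4,2n-1)$. For the middle term I substitute the value of $\HH^{*,*}(C_\eta(K))$: its $\etaHM$-part (the base of the cone) sits at the lower sphere bidegree $(2n-3,n-1)=\deg\alpha_{n-2}$, while its $\HM\theta_{2,1}$-part sits $(2,1)$ higher at $(2n-1,n)=\deg\alpha_{n-1}$. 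This produces $\etaHM^{*,*}(K)\alpha_{n-2}$ and $\HM^{*,*}(K)\alpha_{n-1}$, the module structures being those carried by the cohomology of the summand. Observe that this already foreshadows the dichotomy of the main theorem: the generator with even index $n-1=2k$ acquires $\HM$-coefficients and the one with odd index $n-2=2k-1$ acquires $\etaHM$-coefficients.

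The only place that demands care, and hence the main obstacle, is the bidegree bookkeeping in the odd case: I must fix the overall shift on the twisted cone so that the base of $C_\eta(S)$ lands exactly at $\deg\alpha_{n-2}=(2n-3,n-1)$, after which the built-in internal gap $(2,1)$ between the base and $\theta_{2,1}$ automatically places $\alpha_{n-1}$ at $(2n-1,n)$ and matches the two $\alpha$-classes to the two sphere generators of the even case. Everything else is additivity of cohomology over the decomposition, so the resulting isomorphism is one of graded $\HH^{*,*}(K)$-modules --- which is all that is asserted, the ring relations being outside the scope of the statement.
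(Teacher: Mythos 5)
Your proof is correct and is precisely the paper's (implicit) argument: the corollary is stated without proof immediately after Proposition \ref{mwDecompV2}, and it follows by applying $\HH^{*,*}(-)$ to that motivic decomposition, using additivity over Tate summands and the displayed value $\HH^{*,*}(C_{\eta}(K)) = \HM^{*,*}(K)\theta_{2,1} \oplus \etaHM^{*,*}(K)$. Your explicit normalization of the shift on the twisted cone is the right move, since the twist as written in $\wt{\mathbf{HS}}_{2k+1}$ must indeed be pinned down so that the two cone classes land at $\deg\alpha_{n-2}=(2n-3,n-1)$ and $\deg\alpha_{n-1}=(2n-1,n)$.
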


Now, we can state our main result for $ V_k(\af^n) $. Let $ N_{n,k} = \{ t \in \mathbb{Z} \mid n-k \leq t \leq n-1 \} $. We define the \textit{free pairs} as follows:
\[
F_{n,k} = \{ \{n-1\} \mid \text{if } n \text{ is even} \} \sqcup \{ \{i, i-1\} \subset N_{n,k} \mid \text{if } i \text{ is even} \} \sqcup \{ \{n-k\} \mid \text{if } n-k \text{ is even} \}.
\]
Additionally, we define the \textit{free part} and the \textit{torsion part} of a subset $ I \subset N_{n,k} $ as follows:
\[
I_F = \bigsqcup_{J \in F_{n,k}, J \subset I} J, \quad I_T = I \setminus I_F.
\]
Furthermore, we define the degree for a subset $ I $ as $ d(I) = \left(\sum_{i \in I} (2i + 1), \sum_{i \in I} (i + 1)\right) $.

\begin{theorem}
  \label{mainVk}
  Let $ i \in N_{n,k} $. The generators $ \alpha_{i} $ are elements of $ \HH^{2i+1,i+1}(V_k(\af^n)) $, and for $ J \in F_{n,k} $, the generators $ \beta_J $ belong to $ \HH^{d(J)}(V_k(\af^n)) $. For any subset $ I \subset N_{n,k} $, define 
  \[
  b_{I} = \prod_{J \in F_{n,k}, J \subset I} \beta_J \prod_{i \in I_T} \alpha_i \in \HH^{d(I)}(V_k(\af^n)).
  \]
  Then, we establish the following isomorphism of graded $ \HH^{*,*}(K) $-modules:
  \[
   \HH^{*,*}(V_k(\af^n)) \cong \bigoplus_{I \subset N_{n,k}} ( \HH )_I b_I,
  \]
  where
  \[
    ( \HH )_I= \begin{cases}
      \HH^{*,*}(K), & \text{if } I = I_F, \\
      \HM^{*,*}(K), & \text{if } \mathrm{Max}(I_T) \text{ is even}, \\
      \etaHM^{*,*}(K), & \text{if } \mathrm{Max}(I_T) \text{ is odd}. \\
    \end{cases}
  \]
\end{theorem}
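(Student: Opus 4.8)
The plan is to induct on $k$ using the Thom–Gysin short exact sequence \eqref{exactVk} attached to the fibration $\afnz{n-k}\to V_{k+1}(\af^n)\xr{f_{n,k}} V_k(\af^n)$, with base case $V_1(\af^n)=\afnz{n}$, whose cohomology is the free $\HH^{*,*}(K)$-module on $1$ and on a class $\beta_{n-1}$ in bidegree $(2n-1,n)$ (consistent with the formula, since $\{n-1\}\in F_{n,1}$ in both parities). Writing $A_k:=\HH^{*,*}(V_k(\af^n))$ and $m:=n-k$, the sequence \eqref{exactVk} identifies $A_{k+1}$, as a graded $\HH^{*,*}(K)$-module, with $\mathrm{Coker}(\cup e)\oplus \mathrm{Ker}(\cup e)\cdot\alpha_{n-k-1}$, where $\alpha_{n-k-1}$ is the new generator in bidegree $(2m-1,m)$ and $e:=e(f_{n,k})\in A_k^{2m,m}$ is the Euler class of the rank-$m$ quotient bundle. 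Everything thus reduces to (i) identifying $e$ and (ii) computing the kernel and cokernel of cup product with it.

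The key lemma, and the main obstacle, is the determination of $e$. A preliminary degree count against the inductive description of $A_k$ shows that the only summand meeting bidegree $(2m,m)$ is a $\HH^{-1,-1}(K)$-multiple of the bottom free class $\beta_{\{m\}}$: indeed $\HH^{-1,-1}(K)=\KMW_{-1}(K)=W(K)$ while $\HM^{-1,-1}(K)=\etaHM^{-1,-1}(K)=0$, and a free bottom class $\beta_{\{m\}}$ exists precisely when $\{m\}=\{n-k\}\in F_{n,k}$, i.e. when $m$ is even. I would then establish the sharp value
\[
e(f_{n,k})=\begin{cases}0,& m=n-k\ \text{odd},\\[2pt] \eta\,\beta_{\{m\}},& m=n-k\ \text{even},\end{cases}
\]
by computing the \MW Euler class of the universal rank-$m$ quotient bundle and pinning the coefficient $\eta\in W(K)$ down by naturality against the already known case $V_2$ (the Corollary to Proposition \ref{mwDecompV2}), where for $n$ odd one reads off $e(f_{n,1})=\eta\beta_{n-1}$ and for $n$ even $e(f_{n,1})=0$. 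This is where the genuine geometric input enters, and it is responsible for the entire $\HH$/$\HM$/$\etaHM$ trichotomy.

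With $e$ in hand the homological algebra is driven by the four-term exact sequence $0\to\etaHM^{*,*}(K)\to\HH^{*,*}(K)\xr{\eta}\HH^{*,*}(K)\to\HM^{*,*}(K)\to0$. When $m$ is odd, $\cup e=0$, so $A_{k+1}=A_k\oplus A_k\cdot\alpha_{n-k-1}$; since $n-k-1$ is then even, $\{n-k-1\}$ is a new singleton free pair, $\alpha_{n-k-1}$ is relabelled $\beta_{\{n-k-1\}}$, and every torsion part $I_T$ is unchanged, matching the formula for $N_{n,k+1}$. When $m$ is even I would split $A_k=M\oplus M\beta_{\{m\}}$, where $M$ collects the summands not containing the bottom index, and verify that $\cup e=\cup\,\eta\beta_{\{m\}}$ acts as $\eta$ from $M$ onto $M\beta_{\{m\}}$ and vanishes on $M\beta_{\{m\}}$; reading kernel and cokernel through the four-term sequence replaces each free coefficient $\HH$ by $\HM$ in the cokernel and by $\etaHM$ in the kernel, while the $\HM$ and $\etaHM$ coefficients are preserved.

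Finally I would match this output with the combinatorics of $F_{n,k+1}$: for $m$ even the singleton $\{m\}\in F_{n,k}$ is promoted to the free pair $\{m,m-1\}=\{n-k,n-k-1\}$, and the four blocks $M$, $(M/\eta)\beta_{\{m\}}$, $(\mathrm{Ker}\,\eta)\,\alpha_{n-k-1}$, $M\beta_{\{m\}}\alpha_{n-k-1}$ correspond respectively to subsets of $N_{n,k+1}$ containing neither, only $n-k$, only $n-k-1$, or both bottom indices (using $\beta_{\{m\}}\alpha_{n-k-1}=\beta_{\{n-k,n-k-1\}}$). In each case a short check on $\mathrm{Max}(I_T)$, using that $n-k$ is even and $n-k-1$ is odd, shows the coefficient type agrees with $(\HH)_{\hat I}$. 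Two technical points remain to be nailed down: the vanishing $\cup e|_{M\beta_{\{m\}}}=0$, which I would derive from graded-commutativity (the odd class $\beta_{\{m\}}$ has $h$-torsion square) together with the $\eta$ in $e$; and the splitting of \eqref{exactVk} as $\HH^{*,*}(K)$-modules, which follows from the $C_\eta(K)$ computation recorded before Proposition \ref{mwDecompV2}. I expect step (i), the Euler class identification, to be the real crux.
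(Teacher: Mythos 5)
Your strategy is the same as the paper's: induct on $k$ via the Thom--Gysin sequence \eqref{exactVk}, pin down $e(f_{n,k})$ by a coconnectedness degree count (only the bottom class $\beta_{n-k}$ survives in bidegree $(2(n-k),n-k)$, with coefficient in $\KMW_{-1}$, and only when $n-k$ is even) plus naturality along the fiber inclusion to the known $k=1$ case, and then run the four-term $\eta$-sequence through the kernel/cokernel bookkeeping and match it with the combinatorics of $F_{n,k+1}$. All of that agrees with Corollaries \ref{eulerVk}, \ref{etaSqZero} and the Proposition preceding the proof of Theorem \ref{mainVk}.

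There is, however, one concrete step where your proposed method would fail: the vanishing $\cup\, e|_{M\beta_{n-k}}=0$, i.e.\ $\eta\beta_{n-k}^2=0$. Graded $\epsilon$-commutativity applied to $\beta_{n-k}\in\HH^{2(n-k)+1,\,n-k+1}$ (with $n-k$ even, so both indices odd) gives $h\,\beta_{n-k}^2=0$, but $h$-torsion does not imply $\eta$-torsion in a $\KMW$-module (e.g.\ the $h$-torsion of $GW(K)$ is the fundamental ideal, on which $\eta$ is far from zero), and $\eta h=0$ only yields the vacuous identity $h(\eta\beta^2)=0$. The paper's Corollary \ref{etaSqZero} instead gets this by a second application of the same coconnectedness degree count you already use for the Euler class: $\beta_{n-k}^2$ sits in degree $(2(n-k),\{2(n-k)+2\})$, where the only available summand is $\KM_1\,\alpha_{2(n-k)}$ (or zero if $2(n-k)\geq n$), and $\eta$ acts as zero there. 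You have all the tools to make this argument; you just aimed the wrong one at this step. A smaller divergence: you justify the splitting of \eqref{exactVk} via the split $C_\eta(K)$ sequence, whereas the paper compares the $\eta$-torsion part with the known motivic cohomology of $V_{k+1}(\af^n)$ from \cite{williams2012motivic}; both are at a comparable level of detail and either can be made to work.
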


We can compute the Euler class as a corollary:

\begin{corollary}
  \label{eulerVk}
  For $ k \geq 2 $, we have
  \[
  \HH^{2(n-k),n-k}(V_k(\af^n)) = \begin{cases}
    \KMW_{-1}\beta_{n-k}, & \text{if } n-k \text{ is even,}\\
    0, & \text{if } n-k \text{ is odd.}\\
  \end{cases}
  \]
  Consequently, the Euler class satisfies $ e(f_{n,k})= \eta \beta_{n-k} $ if $ n-k $ is even, and $ e(f_{n,k})= 0 $ if $ n-k $ is odd.
\end{corollary}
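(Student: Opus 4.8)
The plan is to read off the group $\HH^{2(n-k),n-k}(V_k(\af^n))$ directly from the decomposition in Theorem \ref{mainVk} by a support (degree-counting) argument, and then to locate the Euler class inside it. Write $m=n-k\geq 1$ (the fiber $\afnz{n-k}$ must be a nonempty sphere). In the index convention $(p,\{q\})=(p+q,q)$ the target bidegree $(2m,m)$ is $(m,\{m\})$, and for $I\subset N_{n,k}$ with $s(I):=\sum_{i\in I} i$ a short computation places $b_I$ in degree $(s(I),\{s(I)+|I|\})$. Hence $(\HH)_I\,b_I$ contributes to $(m,\{m\})$ exactly the part of $(\HH)_I$ in degree
\[
(m-s(I),\ \{\,m-s(I)-|I|\,\}).
\]
So I would organize everything around the two numbers $p_I=m-s(I)$ and $q_I=p_I-|I|$.

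Next I would kill all but one summand using coconnectivity and negative-weight vanishing. For $I=\emptyset$ one has $p_\emptyset=m>0$, so coconnectivity of $\HH$ gives a zero contribution. For $I\neq\emptyset$, every element of $N_{n,k}$ is $\geq m$, so $s(I)\geq m$, whence $p_I\leq 0$ and $q_I=p_I-|I|\leq -1$. Since motivic cohomology vanishes in negative weight, both $\HM^{p_I,\{q_I\}}(K)$ and $\etaHM^{p_I,\{q_I\}}(K)$ vanish (on the diagonal $p_I=0$ the latter is $2\KM_{q_I}(K)=0$ as well); thus every $I$ with $(\HH)_I\in\{\HM^{*,*}(K),\etaHM^{*,*}(K)\}$ drops out. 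This leaves only the summands with $(\HH)_I=\HH^{*,*}(K)$, i.e. $I=I_F$. For these I would use the vanishing $\HH^{p,\{q\}}(K)=0$ for $p<0$, $q\leq 0$, extracted from the four-term exact sequence preceding the theorem: for $p<0$ and $q\le 0$ the outer terms $\etaHM^{p,\{q\}}(K)$ and $\HM^{p,\{q-1\}}(K)$ vanish, so $\eta\colon \HH^{p,\{q\}}(K)\to\HH^{p,\{q-1\}}(K)$ is an isomorphism; iterating identifies this group with the $\eta$-inverted (Witt) cohomology of $K$, which is concentrated on the diagonal $p=0$ and hence vanishes for $p<0$. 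Consequently every $I=I_F$ with $p_I<0$ contributes zero, and the only surviving summand is the diagonal one $p_I=0$, i.e. $s(I)=m$, which forces $I=\{m\}=\{n-k\}$.

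It remains to treat $I=\{n-k\}$, where $(p_I,\{q_I\})=(0,\{-1\})$. If $n-k$ is even then $\{n-k\}\in F_{n,k}$, so $I=I_F$ and $(\HH)_I=\HH^{*,*}(K)$, contributing $\HH^{0,\{-1\}}(K)=\KMW_{-1}(K)$ on the generator $\beta_{n-k}$, yielding $\HH^{2(n-k),n-k}(V_k(\af^n))=\KMW_{-1}(K)\beta_{n-k}$. If $n-k$ is odd then $\{n-k\}$ is not free, so $I_T=\{n-k\}$ with $\mathrm{Max}(I_T)$ odd and $(\HH)_I=\etaHM^{*,*}(K)$, giving $\etaHM^{0,\{-1\}}(K)=2\KM_{-1}(K)=0$; hence the group is $0$. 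Since $e(f_{n,k})$ lies in exactly this group, the odd case gives $e(f_{n,k})=0$ at once, while in the even case $e(f_{n,k})=c\,\beta_{n-k}$ for some $c\in\KMW_{-1}(K)=W(K)$. The remaining task — which I expect to be the main obstacle — is to pin down $c=\eta$, since the group computation alone determines $e(f_{n,k})$ only up to a unit of $W(K)$. To get the precise value I would trace the construction of the generator $\beta_{n-k}$ in the proof of Theorem \ref{mainVk}, where it arises from the fiber generator $\theta$ of $\afnz{n-k}$ via the Gysin boundary of Corollary \ref{fibEmb}, so that by construction the Euler class is its $\eta$-multiple; alternatively one can read off $c=\eta$ by naturality, comparing along $f_{n,k}$ with the base case of Proposition \ref{mwDecompV2}, whose cone $C_\eta$ records exactly the multiplication-by-$\eta$ attaching data.
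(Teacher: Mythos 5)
Your computation of the group $\HH^{2(n-k),n-k}(V_k(\af^n))$ is correct and follows the same degree-counting route as the paper; if anything it is more carefully justified, since the paper only invokes coconnectedness (which kills just the $I=\emptyset$ coefficient, sitting in degree $p=n-k>0$), while your negative-weight vanishing for $\HM$, $\etaHM$ and for $\HH^{p,\{q\}}(K)$ with $p<0$, $q<0$ is what actually disposes of all the summands with $s(I)>n-k$. The odd case of the Euler class then follows for free, as you say.

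The gap is in the even case, exactly where you flag it: pinning down $c=\eta$ rather than an arbitrary element of $\KMW_{-1}(K)=W(K)$. Your first sketch --- that $\beta_{n-k}$ ``arises from the fiber generator via the Gysin boundary, so by construction the Euler class is its $\eta$-multiple'' --- is a non-sequitur: the Gysin boundary in the sequence for $f_{n,k-1}$ produces the generator $\beta_{n-k}$ of $\HH^{*,*}(V_k(\af^n))$, but says nothing about the Euler class of the \emph{next} bundle $f_{n,k}$. Your second sketch is the paper's actual route, but the comparison is not ``along $f_{n,k}$'': one restricts along the fiber embedding $g:\afnz{n-k+1}\to V_k(\af^n)$ over a basepoint of $V_{k-1}(\af^n)$, over which $f_{n,k}$ pulls back to $f_{n-k+1,1}:V_2(\af^{n-k+1})\to\afnz{n-k+1}$. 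Two inputs are then needed that your sketch omits. First, $g^*$ must be an \emph{isomorphism} on $\HH^{2(n-k),n-k}$ --- this is what Corollary \ref{fibEmb} together with your own degree analysis gives (both sides are $\KMW_{-1}\beta_{n-k}$ and $g^*$ matches generators); without injectivity the comparison determines nothing. Second, the value $e(f_{n-k+1,1})=(n-k+1)_{\epsilon}\,\eta\,\beta_{n-k}=\eta\beta_{n-k}$ for $n-k$ even is the known rank-one computation from \cite[Prop 3.8]{peng2023milnorwitt}; it is a genuine external input and cannot be read off from the motivic decomposition of $V_2$ in Proposition \ref{mwDecompV2}, which only records the attaching data up to the splitting and in particular does not by itself normalize the unit in $W(K)$.
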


\begin{proof}
  
  The first part of the statement follows from comparing the degrees of generators in Theorem \ref{mainVk}. Specifically, since $ b_{n-k} $ has the lowest degree $ (n-k, \{n-k+1\}) $ and $ \HH $ is coconnected (i.e., $ \HH^{p, \{q\}}(K)=0 $ for $ p>0 $), we conclude that
  \[
    \HH^{2(n-k),n-k}(V_k(\af^n))= (\HH)_{n-k}^{-1,-1}b_{n-k}.
  \]
  For $ n-k $ even, we have $ b_{n-k}=\beta_{n-k} $ and $ (\HH)_{n-k}^{-1,-1} = \KMW_{-1} $. Conversely, for $ n-k $ odd, it follows that $ (\HH)_{n-k}^{-1,-1}=\etaHM^{-1,-1}(K)=0 $. 

  The known values for the Euler classes in the case $ k=1 $ are provided by \cite[Prop 3.8]{peng2023milnorwitt}. Specifically, in these situations, it is stated that $ e(f_{n,1}) = n_{\epsilon}\eta \beta_{n-1} \in \HH^{2(n-1),n-1}(\afnz{n}) $. This implies that $ e(f_{n,1})=0 $ if $ n $ is even, and $ e(f_{n,1})=\eta \beta_{n-1} $ if $ n $ is odd, where $ \beta_{n-1} \in \HH^{2n-1,n}(\afnz{n}) $ is a generator.  

We can consider the following diagram: 
% https://q.uiver.app/#q=WzAsOCxbMCwxLCJcXGFmbnp7bi1rKzF9Il0sWzEsMSwiVl9rKFxcYWZebikiXSxbMiwxLCJWX3trLTF9KFxcYWZebikiXSxbMSwyLCJWX3trLTJ9KFxcYWZebikiXSxbMiwyLCJWX3trLTJ9KFxcYWZebikiXSxbMSwwLCJWX3syfShcXGFmXntuLWsrMn0pIl0sWzIsMCwiXFxhZm56e24taysyfSJdLFswLDAsIlxcYWZuentuLWsrMX0iXSxbMCwxXSxbMSwyLCJmX3tuLGt9Il0sWzEsM10sWzIsNF0sWzUsMV0sWzYsMl0sWzcsMCwiIiwwLHsibGV2ZWwiOjIsInN0eWxlIjp7ImhlYWQiOnsibmFtZSI6Im5vbmUifX19XSxbNyw1XSxbNSw2LCJmX3tuLWsrMiwyfSJdLFszLDQsIiIsMSx7ImxldmVsIjoyLCJzdHlsZSI6eyJoZWFkIjp7Im5hbWUiOiJub25lIn19fV0sWzUsMiwiIiwwLHsic3R5bGUiOnsibmFtZSI6ImNvcm5lciJ9fV1d
\[
\begin{tikzcd}
    {\afnz{n-k}} & {V_{2}(\af^{n-k+1})} & {\afnz{n-k+1}} \\
    {\afnz{n-k}} & {V_{k+1}(\af^n)} & {V_{k}(\af^n)} \\
    & {V_{k-1}(\af^n)} & {V_{k-1}(\af^n)}
    \arrow[from=1-1, to=1-2]
    \arrow[Rightarrow, no head, from=1-1, to=2-1]
    \arrow["{f_{n-k+1,1}}", from=1-2, to=1-3]
    \arrow[from=1-2, to=2-2]
    \arrow["\lrcorner"{anchor=center, pos=0.125}, draw=none, from=1-2, to=2-3]
    \arrow["{g}", from=1-3, to=2-3]
    \arrow[from=2-1, to=2-2]
    \arrow["{f_{n,k}}", from=2-2, to=2-3]
    \arrow[from=2-2, to=3-2]
    \arrow[from=2-3, to=3-3]
    \arrow[Rightarrow, no head, from=3-2, to=3-3]
\end{tikzcd}
\]

Utilizing Corollary \ref{fibEmb}, we observe that when $ n-k $ is even, the pullback 

\[
g^*: \HH^{2(n-k),n-k}(V_{k}(\af^{n})) \to \HH^{2(n-k),n-k}(\afnz{n-k+1}) 
\]

is an isomorphism. Furthermore, the Euler class is compatible with the pullback, expressed as $ g^*(e(f_{n,k})) = e(f_{n-k+1,1}) $. Consequently, we derive that $ e(f_{n,k}) = \eta \beta_{n-k} $. 

\end{proof}

Although we do not provide the multiplicative structure of the graded cohomology group, we can state a useful fact about it.

\begin{corollary}
\label{etaSqZero}
Assume $ n-k $ is even, and let $ \beta_{n-k} \in \HH^{*,*}(V_k(\af^n)) $ be a generator with $ n > k $. Then we have 

\[
\eta \beta_{n-k}^2 = 0.
\]
\end{corollary}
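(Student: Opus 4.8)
The plan is to pin down the group $\HH^{4m+2,2m+2}(V_k(\af^n))$ in which $\beta_{n-k}^2$ lives (writing $m := n-k$, which is even and $\geq 2$ since $n>k$) purely from the additive decomposition of Theorem \ref{mainVk} together with coconnectivity, and then to show that this group consists only of summands on which $\eta$ acts by zero. Since $\beta_{n-k}\in\HH^{2m+1,m+1}(V_k(\af^n))$ by Theorem \ref{mainVk}, its square lies in $\HH^{4m+2,2m+2}$; it therefore suffices to prove that every summand $(\HH)_I b_I$ contributing to this bidegree is of type $\HM$ (which will leave open the possibility that $\beta_{n-k}^2$ is itself nonzero, explaining why the statement is phrased with $\eta$ in front).

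First I would do the degree bookkeeping. A summand $(\HH)_I b_I$ meets total bidegree $(4m+2,2m+2)$ only if $(\HH)_I$ is nonzero in bidegree $(4m+2-d(I)_1,\,2m+2-d(I)_2)$, where $d(I)_1=\sum_{i\in I}(2i+1)$ and $d(I)_2=\sum_{i\in I}(i+1)$. All three coefficient theories $\HH$, $\HM$, $\etaHM$ are coconnected, so the first index must not exceed the second; a one-line computation, using $d(I)_1-d(I)_2=\sum_{i\in I}i$, turns this into $\sum_{i\in I}i\geq 2m$. Off the diagonal (i.e. when $\sum_{i\in I}i>2m$) the coefficient agrees with motivic cohomology $\HM$, which vanishes as soon as its first index $4m+2-d(I)_1=4m+2-2\sum_{i\in I}i-|I|$ is negative; and $\sum_{i\in I}i\geq 2m+1$ forces this index to be $\leq -|I|<0$. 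Hence only subsets with $\sum_{i\in I}i=2m$ can contribute.

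Next I would enumerate these subsets. As $I\subset N_{n,k}=\{m,m+1,\ldots,m+k-1\}$, any two distinct elements are $\geq m$ and $\geq m+1$, already summing to $>2m$; so $\sum_{i\in I}i=2m$ forces $|I|=1$, whence $I=\{2m\}$, which lies in $N_{n,k}$ exactly when $m<k$. If $m\geq k$ there is no such subset, so $\HH^{4m+2,2m+2}(V_k(\af^n))=0$ and $\beta_{n-k}^2=0$. If $m<k$, the single surviving summand is $(\HH)_{\{2m\}}b_{\{2m\}}$, and here I would verify that $\{2m\}\notin F_{n,k}$: the only free singletons are $\{n-1\}$ (present only for $n$ even) and $\{m\}=\{n-k\}$ (present only for $m$ even), but $2m=m$ forces $m=0$ (excluded), while $2m=n-1$ would force $n$ odd since $2m$ is even, contradicting the hypothesis on $\{n-1\}$; and a singleton contains no free pair. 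Thus for $I=\{2m\}$ we have $I_F=\emptyset$, $I_T=\{2m\}$, $b_{\{2m\}}=\alpha_{2m}$, and since $2m$ is even, $\mathrm{Max}(I_T)=2m$ is even, so $(\HH)_{\{2m\}}=\HM^{*,*}(K)$.

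Finally I would conclude via the module structure. The summand $\HM^{*,*}(K)\,\alpha_{2m}$ is, as a graded $\HH^{*,*}(K)$-module, a copy of motivic cohomology, on which $\eta$ acts by zero — this is precisely the content of the four-term exact sequence $0\to\etaHM^{*,*}(K)\to\HH^{*,*}(K)\xrightarrow{\eta}\HH^{*-1,*-1}(K)\to\HM^{*-1,*-1}(K)\to 0$, which realizes $\HM$ as $\HH/\eta$. Hence $\eta$ annihilates the entire group $\HH^{4m+2,2m+2}(V_k(\af^n))$, and in particular $\eta\beta_{n-k}^2=0$. I expect the main obstacle to be the middle step: one must be sure that the coconnectivity bound together with the vanishing range of motivic cohomology genuinely kills all off-diagonal summands and all subsets of size $\geq 2$, and that $\{2m\}$ is a torsion ($\HM$) index rather than a free one. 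A useful consistency check is graded-commutativity, which (since $\beta_{n-k}$ sits in bidegree $(2m+1,m+1)$ with $m$ even) yields $h\,\beta_{n-k}^2=0$ and is compatible with $\beta_{n-k}^2$ lying in the $2$-torsion-flavored $\HM$-part.
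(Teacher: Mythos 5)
Your argument is correct and is essentially the paper's own proof, spelled out in more detail: the paper likewise places $\beta_{n-k}^2$ in degree $(2(n-k),\{2(n-k)+2\})$, uses coconnectedness (together with the vanishing of the coefficient groups below the diagonal in negative degrees) to isolate the single summand $\KM_1\alpha_{2(n-k)}$ when $2(n-k)<n$, concludes $\beta_{n-k}^2=0$ when $2(n-k)\ge n$, and finishes because $\eta$ annihilates the $\HM$-type summand. Your enumeration of the subsets $I$ with $\sum_{i\in I}i=2(n-k)$ and the check that $\{2(n-k)\}$ is a torsion index are precisely the details the paper's two-line proof leaves implicit.
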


\begin{proof}

By induction and by comparing the degree, we find that the degree of $ \beta_{n-k}^2 $ is given by $ \mathrm{deg}(\beta_{n-k}^2) = (2(n-k), \{2(n-k+1)\}) $. Again, exploiting the coconnectedness of $ \HH $, we conclude that $ \beta_{n-k}^2 \in (\HH)_{2(n-k)}^{1,1} b_{2(n-k)} = \KM_1 \alpha_{2(n-k)} $ if $ 2(n-k) < n $, or that $ \beta_{n-k}^2 = 0 $ if $ 2(n-k) \geq n $. 

In both scenarios, we establish that $ \eta \beta_{n-k}^2 = 0 $. 
\end{proof}

\begin{remark}

According to the computations in motivic cohomology \cite{williams2012motivic}, we obtain the following result:
\[
    \beta_{n-k}^2= \begin{cases}
      0, &\text{if } 2(n-k) \geq n,\\
      [-1]_{\KM}\alpha_{2(n-k)}, &\text{if } 2(n-k) < n.\\
    \end{cases}  
\]
\end{remark}

We prove our main theorem by induction.

\begin{proposition}
Assuming that Theorem \ref{mainVk} holds for $ V_k(\af^n) $, then the exact sequence (\ref{exactVk}) splits. Furthermore, we have the following isomorphism of $ \HH^{*,*}(K) $-modules:
\[
    \HH^{*,*}(V_{k+1}(\af^n)) \cong \begin{cases}
      \HH^{*,*}(V_k(\af^n)) \oplus \HH^{*,*}(V_k(\af^n)) \beta_{n-k-1}, &\text{if } n-k \text{ is odd,} \\
      \HH^{*,*}(V_{k-1}(\af^n)) \oplus \mathrm{Coker}_{\eta}(\HH^{*,*}(V_{k-1}(\af^n))) \alpha_{n-k} \\ 
      \oplus \mathrm{Ker}_{\eta}(\HH^{*,*}(V_{k-1}(\af^n))) \alpha_{n-k-1} \oplus \HH^{*,*}(V_{k-1}(\af^n)) \beta_{n-k,n-k-1}, &\text{if } n-k \text{ is even.}\\
    \end{cases}
\]
Here, $ \mathrm{Coker}_{\eta}(\HH^{*,*}(V_{k-1}(\af^n))) $ and $ \mathrm{Ker}_{\eta}(\HH^{*,*}(V_{k-1}(\af^n))) $ denote the cokernel and kernel of $ \eta $, respectively:
\[
    0\to \mathrm{Ker}_{\eta}(\HH^{*,*}(V_{k-1}(\af^n))) \to \HH^{*,*}(V_{k-1}(\af^n)) \xrightarrow{\eta} \HH^{*,*}(V_{k-1}(\af^n)) \to \mathrm{Coker}_{\eta}(\HH^{*,*}(V_{k-1}(\af^n))) \to 0.
\]
\end{proposition}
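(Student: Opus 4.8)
The plan is to run the induction through the Thom--Gysin short exact sequence \eqref{exactVk} attached to $\afnz{n-k} \to V_{k+1}(\af^n) \xr{f_{n,k}} V_k(\af^n)$, splitting into the two parities of $n-k$ dictated by Corollary \ref{eulerVk}. Throughout I write $f := f_{n,k}$, $e := e(f_{n,k})$, and $A' := \HH^{*,*}(V_{k-1}(\af^n))$, and I use two standard facts: the projection formula $r(f^*(a)\cup x) = a \cup r(x)$ for the residue $r$, and the vanishing $f^*(e) = 0$ (the sphere bundle pulled back to its own total space has a section, so its Euler class dies). Before the case analysis I would record, directly from Theorem \ref{mainVk}, that when $n-k$ is even the singleton $\{n-k\}$ is a free pair, so $F_{n,k} = F_{n,k-1} \sqcup \{\{n-k\}\}$ and consequently there is a decomposition of $\HH^{*,*}(K)$-modules $\HH^{*,*}(V_k(\af^n)) \cong A' \oplus A'\beta_{n-k}$. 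This is what lets me rephrase the even case in terms of $V_{k-1}(\af^n)$.

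For $n-k$ odd, Corollary \ref{eulerVk} gives $e = 0$, so $\cup e$ vanishes and \eqref{exactVk} collapses to the short exact sequence $0 \to \HH^{*,*}(V_k(\af^n)) \xr{f^*} \HH^{*,*}(V_{k+1}(\af^n)) \xr{r} \HH^{*,*}(V_k(\af^n)) \to 0$, the target being shifted by $\mathrm{deg}\,\beta_{n-k-1} = (2(n-k)-1, n-k)$. Since $n-k-1$ is even, a new free generator $\beta_{n-k-1}$ is available; I would choose it as a lift of the fiber generator $\theta$ of $\afnz{n-k}$ (possible because $r$ is onto, via Corollary \ref{fibEmb}), so that $r(\beta_{n-k-1}) = 1$. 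Then $a \mapsto f^*(a)\cup \beta_{n-k-1}$ is a section of $r$ by the projection formula, and the sequence splits as $\HH^{*,*}(V_{k+1}(\af^n)) \cong \HH^{*,*}(V_k(\af^n)) \oplus \HH^{*,*}(V_k(\af^n))\beta_{n-k-1}$, as claimed.

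For $n-k$ even the connecting map is $\cup e = \cdot\,\eta\beta_{n-k}$. Writing a class of $A' \oplus A'\beta_{n-k}$ as $a_0 + a_1\beta_{n-k}$ and using $\eta\beta_{n-k}^2 = 0$ from Corollary \ref{etaSqZero}, I compute $\eta\beta_{n-k}\cdot(a_0 + a_1\beta_{n-k}) = (\eta a_0)\beta_{n-k}$, so $\cup e$ acts as $(a_0,a_1)\mapsto(0,\eta a_0)$. This immediately gives $\mathrm{Ker}_{\eta}(\cup e) = \mathrm{Ker}_{\eta}(A') \oplus A'\beta_{n-k}$ and $\mathrm{Coker}(\cup e) = A' \oplus \mathrm{Coker}_{\eta}(A')\beta_{n-k}$, matching the four summands of the statement. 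On the cokernel side I set $\alpha_{n-k} := f^*\beta_{n-k}$; since $\eta\alpha_{n-k} = f^*(\eta\beta_{n-k}) = f^*(e) = 0$, its coefficient module is exactly $\mathrm{Coker}_{\eta}(A')$, which is consistent with the $\HM$/$\etaHM$ bookkeeping of Theorem \ref{mainVk}. On the kernel side I lift $\beta_{n-k}$ (which lies in $\mathrm{Ker}(\cup e)$ precisely because $\eta\beta_{n-k}^2 = 0$) to a class $\beta_{n-k,n-k-1}$ with $r(\beta_{n-k,n-k-1}) = \beta_{n-k}$, and I introduce a generator $\alpha_{n-k-1}$ lifting the $\eta$-torsion summand.

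The main obstacle is splitting the kernel extension, and within it the torsion summand $\mathrm{Ker}_{\eta}(A')\alpha_{n-k-1}$. The free summand $A'\beta_{n-k}$ is harmless: lifting $\beta_{n-k}$ to $\beta_{n-k,n-k-1}$ and applying the projection formula gives an $A'$-linear section exactly as in the odd case. The torsion summand cannot be split this way, because the naive multiplicative section would require lifting the unit $1 \in \HH^{0,0}(K)$, whereas $1 \notin \mathrm{Ker}(\cup e) = \mathrm{im}(r)$ (indeed $\eta\cdot 1 = \eta \neq 0$); only $\eta$-torsion classes of the base lie in the image of $r$, and multiplying a chosen lift $\alpha_{n-k-1}$ by $\mathrm{Ker}_{\eta}(A')$ scales by a hyperbolic factor and so cannot recover the identity. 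I therefore expect to split this last piece by hand: restrict $r$ to $M := r^{-1}(\mathrm{Ker}_{\eta}(A'))$ and build an $\HH^{*,*}(K)$-linear section of $0 \to \mathrm{Coker}(\cup e) \to M \to \mathrm{Ker}_{\eta}(A') \to 0$, exploiting that $\mathrm{Ker}_{\eta}(A')$ is annihilated by $\eta$ (hence a module over $\HH^{*,*}(K)/(\eta) \cong \HM^{*,*}(K)$) and comparing degree by degree with the known motivic cohomology $\HM^{*,*}(V_{k+1}(\af^n))$ of \cite{vcadek2003cohomology, williams2012motivic}, where the corresponding splitting is available. Assembling the section of the free summand with this section of the torsion summand then yields the claimed four-term decomposition.
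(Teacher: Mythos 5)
Your proposal follows essentially the same route as the paper: the boundary map is identified via Corollaries \ref{eulerVk} and \ref{etaSqZero}, the free summands are split using the $\HH^{*,*}(V_k(\af^n))$- (resp.\ $\HH^{*,*}(V_{k-1}(\af^n))$-) module structure, and the remaining $\eta$-torsion summand $\mathrm{Ker}_{\eta}(\HH^{*,*}(V_{k-1}(\af^n)))\alpha_{n-k-1}$ is split by comparison with the known motivic cohomology $\HM^{*,*}(V_{k+1}(\af^n))$ of \cite{williams2012motivic}, exactly as in the paper's proof. Your version simply makes explicit the computation of $\cup\, e$ on $A'\oplus A'\beta_{n-k}$ and the projection-formula sections, which the paper leaves implicit.
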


\begin{proof}

Since the boundary morphism is evident from Corollaries \ref{eulerVk} and \ref{etaSqZero}, it suffices to demonstrate that the exact sequence (\ref{exactVk}) splits.

When $ n-k $ is odd, the result is straightforward. In this case, $ \HH^{*,*}(V_{k+1}(\af^n)) $ serves as a $ \HH^{*,*}(V_k(\af^n)) $-module, and $ \HH^{*,*}(V_k(\af^n)) $ itself is a free $ \HH^{*,*}(V_k(\af^n)) $-module. We denote the generators by $ 1 $ and $ \beta_{n-k-1} $.

When $ n-k $ is even, under our assumption, we have the isomorphism 

\[
\HH^{*,*}(V_k(\af^n)) \cong \HH^{*,*}(V_{k-1}(\af^n)) \oplus \HH^{*,*}(V_{k-1}(\af^n)) \beta_{n-k}.
\]

From Corollary \ref{eulerVk}, we can identify the Euler class as $ e(f_{n,k}) = \eta \beta_{n-k} $. Moreover, invoking Corollary \ref{etaSqZero}, we can rewrite the exact sequence (\ref{exactVk}) as:

\begin{eqnarray*}
0 & \to & \HH^{*,*}(V_{k-1}(\af^n)) \oplus \mathrm{Coker}_{\eta}(\HH^{*,*}(V_{k-1}(\af^n))) \alpha_{n-k} \\
& \to & \HH^{*,*}(V_{k+1}(\af^n)) \\
& \to & \mathrm{Ker}_{\eta}(\HH^{*,*}(V_{k-1}(\af^n))) \alpha_{n-k-1} \oplus \HH^{*,*}(V_{k-1}(\af^n)) \beta_{n-k,n-k-1} \to 0.
\end{eqnarray*}

To demonstrate that this sequence splits, it is sufficient to consider the $\eta$-torsion part. This is evident when comparing it with the exact sequence of $\HM^{*,*}(V_{k+1}(\af^n))$ presented in \cite{williams2012motivic}.
\end{proof}

\begin{proof}[Proof of Theorem \ref{mainVk}]

We observe that the following relations hold:

\[
\mathrm{Coker}_{\eta}(\HH^{*,*}(K)) = \HM^{*,*}(K), \quad \mathrm{Ker}_{\eta}(\HH^{*,*}(K)) = \etaHM^{*,*}(K),
\]
\[
\mathrm{Coker}_{\eta}(\HM^{*,*}(K)) = \mathrm{Ker}_{\eta}(\HM^{*,*}(K)) = \HM^{*,*}(K),
\]
\[
\mathrm{Coker}_{\eta}(\etaHM^{*,*}(K)) = \mathrm{Ker}_{\eta}(\etaHM^{*,*}(K)) = \etaHM^{*,*}(K).
\]

These observations provide the necessary conditions to establish the existence of a direct summand in Theorem \ref{mainVk}.
\end{proof}

\section{Decomposition of MW-motive of $ V_k(\af^n) $}

It is noteworthy that throughout the proof, we relied solely on the coconnectedness of $\HH$ as described in Corollaries \ref{eulerVk} and \ref{etaSqZero}. Consequently, the same computation applies to all coconnected MW-motives $ \mathbb{E} \in \DMt(K) $. In particular, this assertion holds for $ C_{\eta} = C_{\eta}(K) $.

\begin{theorem}
  \label{EmainVk}
Let $\mathbb{E} \in \DMt(K)$ be a coconnected MW-motive, and let ${^\eta\mathbb{E}}$ and $\mathbb{E}/\eta$ denote the kernel and cokernel of the morphism $\eta$. The sequence is defined as follows:
\[
  0 \to {^\eta\mathbb{E}}^{*,*}(K) \to \mathbb{E}^{*,*}(K) \xr{\eta} \mathbb{E}^{*,*}(K)  \to (\mathbb{E}/\eta)^{*,*}(K) \to 0
\]
We use the same notation as in Theorem \ref{mainVk}. Then, we establish an isomorphism of graded $\mathbb{E}^{*,*}(K)$-modules:
  \[
    \mathbb{E}^{*,*}(V_k(\af^n)) = \bigoplus_{I \subset N_{n,k}} \mathbb{E}_I b_I
  \]
  where
  \[
    \mathbb{E}_I= \begin{cases}
      \mathbb{E}^{*,*}(K), & \text{if $I = I_F$}, \\
      (\mathbb{E}/\eta)^{*,*}(K), & \text{if $\mathrm{Max}(I_T)$ is even}, \\
      {^\eta\mathbb{E}}^{*,*}(K), & \text{if $\mathrm{Max}(I_T)$ is odd}. \\ 
    \end{cases}
  \]
\end{theorem}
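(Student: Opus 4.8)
The plan is to run the very same induction on $k$ that established Theorem \ref{mainVk}, reading every step with $\HH$ replaced by $\mathbb{E}$. The starting observation is that $\DMt(K)$ is by construction a category of modules over the unit object, which represents \MW motivic cohomology $\HH$; hence $\mathbb{E}$ is canonically an $\HH$-module and $\mathbb{E}^{*,*}(-)$ is a graded module over $\HH^{*,*}(-)$. Because the bundle $f_{n,k}$ is $\GL$-oriented (hence $\HH$-oriented), the Thom class in $\HH$-cohomology induces a Thom isomorphism for the module $\mathbb{E}$, so the argument proving the $\af^1$-Thom-Gysin sequence applies verbatim. This yields, for each coconnected $\mathbb{E}$, the exact analogue of the sequence (\ref{exactVk}), in which the connecting map is cup product with the Euler class $e(f_{n,k}) \in \HH^{2(n-k),n-k}(V_k(\af^n))$ acting through the module structure.

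First I would transfer the two inputs that the proof of Theorem \ref{mainVk} actually consumed. Corollary \ref{eulerVk} gives $e(f_{n,k}) = \eta\beta_{n-k}$ for $n-k$ even and $e(f_{n,k}) = 0$ for $n-k$ odd, and Corollary \ref{etaSqZero} gives $\eta\beta_{n-k}^2 = 0$; both are identities in $\HH^{*,*}(V_k(\af^n))$, proved using only the coconnectedness $\HH^{p,\{q\}}(K)=0$ for $p>0$. Since $\mathbb{E}$ is coconnected by hypothesis, the same degree comparisons place the relevant classes of $\mathbb{E}^{*,*}(V_k(\af^n))$ in the expected bidegrees, while the two $\HH$-identities govern the $\mathbb{E}$-connecting map through the module action. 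Consequently, under the inductive decomposition $\mathbb{E}^{*,*}(V_k(\af^n)) \cong \mathbb{E}^{*,*}(V_{k-1}(\af^n)) \oplus \mathbb{E}^{*,*}(V_{k-1}(\af^n))\beta_{n-k}$, cup product with $e(f_{n,k})$ acts by $x + y\beta_{n-k} \mapsto (\eta x)\beta_{n-k}$, so its cokernel contributes an $\mathbb{E}/\eta$ summand carried by $\alpha_{n-k}$ and its kernel a ${}^{\eta}\mathbb{E}$ summand carried by $\alpha_{n-k-1}$, exactly matching the three cases of $\mathbb{E}_I$. The base of the induction, $V_1(\af^n) = \afnz{n}$ and $V_2(\af^n)$, is handled by Proposition \ref{mwDecompV2}: applying $\mathbb{E}$-cohomology to the motivic decomposition there reduces to computing $\mathbb{E}^{*,*}(C_\eta(K))$, which splits as $(\mathbb{E}/\eta) \oplus {}^{\eta}\mathbb{E}$ again by the coconnectedness degree argument.

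The step I expect to be the genuine obstacle is the splitting of the sequence (\ref{exactVk}) for a general $\mathbb{E}$. For $\HH$ this was settled by comparison with the motivic-cohomology computation of \cite{williams2012motivic}, a crutch unavailable for an arbitrary coefficient motive; moreover one cannot simply set $\mathbb{E}^{*,*}(V_{k+1}(\af^n)) = \HH^{*,*}(V_{k+1}(\af^n)) \otimes_{\HH^{*,*}(K)} \mathbb{E}^{*,*}(K)$, since tensoring is only right exact and would miscompute the kernel-type summands ${}^{\eta}\mathbb{E}$. My plan is instead to split by exhibiting explicit module sections: the free generators $\beta_{n-k,n-k-1}$ lift because the corresponding classes in $\HH^{*,*}(V_{k+1}(\af^n))$ act on the pullback $\mathbb{E}^{*,*}(K) \to \mathbb{E}^{*,*}(V_{k+1}(\af^n))$ to produce honest lifts, and the torsion generator $\alpha_{n-k-1}$ is lifted by restricting the action of the corresponding $\HH$-class to the subgroup ${}^{\eta}\mathbb{E}^{*,*}(K)$. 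The remaining point, that no nontrivial extension survives between the $\mathbb{E}/\eta$ and ${}^{\eta}\mathbb{E}$ pieces, I would verify by the same coconnectedness degree-vanishing used throughout, forcing the potential extension classes into bidegrees where $\mathbb{E}^{*,*}(K)$ is zero. Once the splitting is in place, the bookkeeping of which coefficient $\mathbb{E}_I$ attaches to each generator $b_I$ is identical to the proof of Theorem \ref{mainVk}, and assembling the summands over all $I \subset N_{n,k}$ gives the stated isomorphism of graded $\mathbb{E}^{*,*}(K)$-modules.
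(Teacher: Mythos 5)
Your proposal follows the same route as the paper: the paper's entire proof of this theorem is the observation that the induction establishing Theorem \ref{mainVk} consumed only the coconnectedness of $\HH$ (via Corollaries \ref{eulerVk} and \ref{etaSqZero}), so it transfers verbatim to any coconnected $\mathbb{E}$ viewed as an $\HH$-module. Where you add value is precisely the point you flag as the genuine obstacle: the splitting of the analogue of the sequence (\ref{exactVk}). In the paper the splitting for $\HH$ itself is settled by comparison with the motivic-cohomology computation of \cite{williams2012motivic}, which is \emph{not} a pure coconnectedness argument and is not available for an arbitrary coefficient motive; the paper's one-line proof of the present theorem silently glosses over this. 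Your replacement --- exhibiting explicit $\mathbb{E}^{*,*}(K)$-module sections via the action of the $\HH$-classes $\beta_{n-k,n-k-1}$ and $\alpha_{n-k-1}$ on pulled-back coefficients, restricted to ${}^{\eta}\mathbb{E}^{*,*}(K)$ for the torsion generator, and killing residual extensions by the coconnectedness degree-vanishing --- is the right shape of argument and is what a complete proof actually requires; the remaining detail to check is that the composite of your section for $\alpha_{n-k-1}$ with the projection onto $\mathrm{Ker}_\eta(\cdot)\alpha_{n-k-1}$ is the identity, which again reduces to a degree comparison. Your warning that one cannot simply tensor $\HH^{*,*}(V_k(\af^n))$ with $\mathbb{E}^{*,*}(K)$ over $\HH^{*,*}(K)$ is also well taken and correctly identifies why the kernel-type summands need separate treatment.
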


We now proceed to prove the conjecture stated in \cite[Conj 1.4]{peng2023milnorwitt}. Recall that $ C_\eta(S) $ represent the cone of the morphism $ \Mt(S) \xr{\eta} \Mt(S)(1)[1] $, and

\begin{eqnarray*}
\wt{\mathbf{HS}}_{2k} & := & \Mt(\afnz{2k}) \cong \Mt(S) \oplus \Mt(S)(2k)[4k-1], \\
\wt{\mathbf{HS}}_{2k+1} & := & \Mt(S) \oplus C_\eta(S)(2k)[4k] \oplus \Mt(S)(4k+1)[8k]. 
\end{eqnarray*}

\begin{theorem}
	\label{mwDecompVk}
For any $ i, j \in \mathbb{N} $, we obtain an MW-motivic decomposition of the following forms:
	\[
	\Mt(V_{2j}(\af^{2i})) \cong \wt{\mathbf{HS}}_{2i} \otimes  \wt{\mathbf{HS}}_{2i-1} \otimes \wt{\mathbf{HS}}_{2i-3} \otimes \ldots \otimes \wt{\mathbf{HS}}_{2i-1-2(j-2)} \otimes \Mt(\afnz{2i+1-2j})
	\]
	\[
	\Mt(V_{2j+1}(\af^{2i})) \cong \wt{\mathbf{HS}}_{2i} \otimes \wt{\mathbf{HS}}_{2i-1} \otimes \wt{\mathbf{HS}}_{2i-3} \otimes \ldots \otimes \wt{\mathbf{HS}}_{2i-1-2(j-1)}
	\]
	\[
	\Mt(V_{2j}(\af^{2i+1})) \cong \wt{\mathbf{HS}}_{2i+1} \otimes \wt{\mathbf{HS}}_{2i-1} \otimes \ldots \otimes \wt{\mathbf{HS}}_{2i+1-2(j-1)}
	\]
	\[
	\Mt(V_{2j+1}(\af^{2i+1})) \cong \wt{\mathbf{HS}}_{2i+1} \otimes \wt{\mathbf{HS}}_{2i-1} \otimes \ldots \otimes \wt{\mathbf{HS}}_{2i+1-2(j-1)} \otimes \Mt(\afnz{2i+1-2j}).
	\]
\end{theorem}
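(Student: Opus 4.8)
The plan is to prove all four decompositions simultaneously by strong induction on $k$, peeling off the lowest-index sphere factor of $V_k(\af^n)$ one or two at a time. The engine is the motivic Gysin triangle attached to the one-column fibration $\afnz{n-k+1} \to V_k(\af^n) \xrightarrow{f_{n,k-1}} V_{k-1}(\af^n)$, which in $\DMt(K)$ reads
\[
\Mt(V_{k-1}(\af^n))(n-k+1)[2(n-k+1)-1] \to \Mt(V_k(\af^n)) \to \Mt(V_{k-1}(\af^n)) \xr{\tilde e} \Mt(V_{k-1}(\af^n))(n-k+1)[2(n-k+1)],
\]
whose connecting morphism $\tilde e$ is cup product with the Euler class $e(f_{n,k-1}) \in \HH^{2(n-k+1),n-k+1}(V_{k-1}(\af^n)) = \mathrm{Hom}_{\DMt}(\Mt(V_{k-1}(\af^n)),\mathbf{1}(n-k+1)[2(n-k+1)])$. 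By Corollary \ref{eulerVk} this class is $\eta\beta_{n-k+1}$ when the bottom index $n-k+1$ is even and $0$ when it is odd. The induction is anchored by the two base cases $V_1(\af^n)=\afnz{n}$ (matching $\wt{\mathbf{HS}}_n$ when $n$ is even) and $V_2(\af^n)$, whose decomposition into $\wt{\mathbf{HS}}_n$ for $n$ odd, or $\Mt(\afnz{n})\otimes\Mt(\afnz{n-1})$ for $n$ even, is exactly Proposition \ref{mwDecompV2}.

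In the \emph{odd} case ($n-k+1$ odd) the Euler class is $0$, so the morphism $\tilde e$ is null and the triangle splits, giving $\Mt(V_k(\af^n)) \cong \Mt(V_{k-1}(\af^n)) \otimes \Mt(\afnz{n-k+1})$ and factoring off a free odd sphere. In the \emph{even} case ($n-k+1$ even, so $n-k+2$ is odd) I would chain two steps: the odd case applied to $f_{n,k-2}$ first yields $\Mt(V_{k-1}(\af^n)) \cong \Mt(V_{k-2}(\af^n)) \otimes \Mt(\afnz{n-k+2})$, under which the generator $\beta_{n-k+1}$ (of degree matching $\afnz{n-k+2}$) is identified with the pulled-back top class $\theta$ of the sphere factor. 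The connecting morphism for $f_{n,k-1}$ then takes the form $1_{\Mt(V_{k-2}(\af^n))}\otimes(\eta\theta)$, so its fiber distributes over $\otimes\,\Mt(V_{k-2}(\af^n))$ and gives $\Mt(V_k(\af^n)) \cong \Mt(V_{k-2}(\af^n)) \otimes \mathrm{fib}(\eta\theta)$. Since $\mathrm{fib}(\eta\theta)$ on $\Mt(\afnz{n-k+2})$ is precisely the Euler-class fiber computing $\Mt(V_2(\af^{n-k+2})) = \wt{\mathbf{HS}}_{n-k+2}$ (Proposition \ref{mwDecompV2}, as $n-k+2$ is odd), one factor $\wt{\mathbf{HS}}_{n-k+2}$ splits off while $k$ drops by $2$. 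Iterating, and observing that at most one odd step occurs and only at the very start (after which every remaining bottom index is even), reproduces the four product formulas; a short bookkeeping check of the surviving top factor $\wt{\mathbf{HS}}_{2i}$ or $\wt{\mathbf{HS}}_{2i+1}$ and the possible leftover $\Mt(\afnz{2i+1-2j})$ matches each of the four cases, the leftover appearing exactly when $n$ and $k$ have the same parity.

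The main obstacle is upgrading the additive/cohomological splittings of Theorem \ref{EmainVk} to genuine decompositions of motives, i.e.\ controlling $\tilde e$ at the level of $\DMt(K)$ rather than merely on $\mathbb{E}$-cohomology. The odd case is clean: because $\tilde e$ factors through cup product with $e(f_{n,k-1})$ via the diagonal, and $e(f_{n,k-1})$ vanishes as a morphism $\Mt(V_{k-1}(\af^n)) \to \mathbf{1}(n-k+1)[2(n-k+1)]$, the connecting morphism is literally multiplication by $0$. The delicate point is the even case: I must verify that the morphism $\tilde e = {}$ multiplication by $\eta\beta_{n-k+1}$ respects the tensor splitting $\Mt(V_{k-1}(\af^n)) \cong \Mt(V_{k-2}(\af^n)) \otimes \Mt(\afnz{n-k+2})$ with \emph{no cross terms}, so that it genuinely has the shape $1\otimes(\eta\theta)$ and the fiber may be taken factorwise. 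Here the freeness of $\beta_{n-k+1}$ as a module generator (Theorem \ref{mainVk}) ensures that multiplication by $\beta_{n-k+1}$ carries the unit summand isomorphically onto the $\beta_{n-k+1}$-summand, identifying it with the sphere's top class, while the relation $\eta\beta_{n-k+1}^2 = 0$ (Corollary \ref{etaSqZero}) guarantees that no higher attaching maps survive, so the resulting fiber is exactly of $C_\eta$-type and matches $\wt{\mathbf{HS}}_{n-k+2}$. I expect pinning down $\tilde e$ as $1\otimes(\eta\theta)$ as morphisms — not just recovering its effect on cohomology — to be the technical heart of the argument.
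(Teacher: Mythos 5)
Your induction scheme is genuinely different from the paper's: you peel off the \emph{lowest} sphere via the Gysin triangle of $\afnz{n-k+1}\to V_k(\af^n)\to V_{k-1}(\af^n)$, keeping $n$ fixed, whereas the paper peels off the \emph{top} cell via the cofiber sequence $V_{k-1}(\af^{n-1})_+\to V_k(\af^n)_+\to \afnz{n}\wedge V_{k-1}(\af^{n-1})_+$ and, crucially, never computes the attaching morphisms at all: it defines the candidate motive $Y_{k,n}$, uses Theorem \ref{EmainVk} to get $\rHom_{\DMt(K)}(\Mt(V_k(\af^n)),\mathbb{E}(q)[p])\cong\rHom_{\DMt(K)}(Y_{k,n},\mathbb{E}(q)[p])$ for every coconnected $\mathbb{E}$, observes that $Y_{k,n}$ is a finite sum of twists of $\tbZ$ and $C_\eta$ (each coconnected up to twist and shift), and thereby produces a canonical comparison map $\iota_{k,n}$ together with the splitting projections needed to show it is an isomorphism. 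Your case bookkeeping, the base cases $V_1$ and $V_2$, and the odd step (where $e(f_{n,k-1})=0$ forces the triangle to split) are all fine.

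The gap is exactly where you locate it, and it is a real one, not a formality. In the even step you need $\tilde e$ to equal $1_{\Mt(V_{k-2}(\af^n))}\otimes(\eta\theta)$, with vanishing cross terms, as a morphism in $\DMt(K)$. The facts you invoke --- that $\beta_{n-k+1}$ freely generates a summand of $\HH^{*,*}(V_{k-1}(\af^n))$ and that $\eta\beta_{n-k+1}^2=0$ --- are statements about cohomology classes; they identify the Euler class, but cup product with $\eta\beta_{n-k+1}$ as a morphism of motives is $(\mathrm{id}\otimes e)\circ\Delta_{V_{k-1}}$, and the splitting $\Mt(V_{k-1}(\af^n))\cong\Mt(V_{k-2}(\af^n))\otimes\Mt(\afnz{n-k+2})$ you feed into it is a non-canonical splitting of a triangle with zero connecting map, with no a priori compatibility with the diagonal. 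Concretely, the four matrix entries of $\tilde e$ live in groups of the form $\rHom_{\DMt(K)}\bigl(\Mt(V_{k-2}(\af^n))(a)[b],\Mt(V_{k-2}(\af^n))(c)[d]\bigr)$, which are endomorphism-type groups rather than single cohomology groups, and a morphism there is not automatically detected by its effect on $\HH^{*,*}$. The missing ingredient is precisely the paper's detection mechanism: by induction $\Mt(V_{k-2}(\af^n))$ is a finite direct sum of objects $\tbZ(q)[p]$ and $C_\eta(q)[p]$, so maps into it are controlled by Theorem \ref{EmainVk} applied to the coconnected coefficients $\tbZ$ and $C_\eta$; only after importing that computation can you legitimately read off the components of $\tilde e$ from Corollaries \ref{eulerVk} and \ref{etaSqZero} and conclude that the cross terms vanish. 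As written, the step you call the technical heart is asserted rather than proved, so the argument is incomplete at its decisive point.
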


\begin{proof}

We will only prove the case when $ S = \Spec K $. 

Let $ Y_{k,n} $ denote the right-hand side of the isomorphism in the theorem. By definition, we have the following cofiber sequences for $ Y_{k,n} $:

\[
  Y_{k-1,n-1} \to Y_{k,n} \to \tbZ(n)[2n-1] \otimes Y_{k-1,n-1} \xr{0}
\]
for $ n $ even, and

\[
  Y_{k-1,n-1} \to Y_{k,n} \to \tbZ(n)[2n-1] \otimes Y_{k-1,n-1} \xr{\beta}
\]
for $ n $ odd, where $ \beta $ is defined as follows:

\[
 \tbZ(n)[2n-1] \otimes ( Y_{k-2,n-2} \oplus  Y_{k-2,n-2}(n-1)[2n-3] ) 
 \xr{ \begin{bmatrix}
     0 & \eta \\ 
     0 & 0
\end{bmatrix} }
Y_{k-2,n-2}[1] \oplus  Y_{k-2,n-2}(n-1)[2n-2].
\]

From Theorem \ref{EmainVk}, we observe that for coconnected $ \mathbb{E} $, the following holds:

\[
  \rHom_{\DMt(K)}(\Mt(V_{k}(\af^{n})), \mathbb{E}(q)[p]) \cong \rHom_{\DMt(K)}(Y_{k,n}, \mathbb{E}(q)[p]).
\]

It is noteworthy that $ Y_{k,n} $ is a direct sum of $ \tbZ(q)[p] $ and $ C_{\eta}(q)[p] $. In particular, we have

\[
  \rHom_{\DMt(K)}(\Mt(V_{k}(\af^{n})), Y_{k,n}) \cong \rHom_{\DMt(K)}(Y_{k,n}, Y_{k,n}).
\]

Let $ \iota_{k,n} : \Mt(V_{k}(\af^{n})) \to Y_{k,n} $ denote the morphism corresponding to the identity of $ Y_{k,n} $. We now need to prove inductively that $ \iota_{k,n} $ is an isomorphism. For $ k=1 $, this is evident, and for $ k=2 $, the result follows from \cite[Cor 3.9]{peng2023milnorwitt}. 

For $ V_k(\af^n) $, we have the following cofiber sequence in $ \mathcal{H}(K) $ (see \cite[\S 3.2]{peng2023milnorwitt}):

\[
V_{k-1}(\af^{n-1})_+ \to V_k(\af^n)_+ \to \afnz{n} \wedge V_{k-1}(\af^{n-1})_+.
\]

When $ n $ is even, one can construct the splitting projection using that of $ Y_{k,n} $:

\[
  \rHom_{\DMt(K)}(\Mt(V_{k}(\af^{n})), \Mt(V_{k-1}(\af^{n-1}))) \cong \rHom_{\DMt(K)}(\Mt(V_{k}(\af^{n})), Y_{k-1,n-1}) \cong \rHom_{\DMt(K)}(Y_{k,n}, Y_{k-1,n-1}).
\]

Thus, it follows that

\[
  \Mt(V_{k}(\af^{n})) \cong \Mt(V_{k-1}(\af^{n-1})) \oplus \Mt(V_{k-1}(\af^{n-1}))(n)[2n-1] \cong Y_{k-1,n-1} \oplus Y_{k-1,n-1}(n)[2n-1] \cong Y_{k,n}.
\]

For $ n $ odd, by employing the same argument used in the proof of \cite[Prop 3.13]{peng2023milnorwitt}, we arrive at the following cofiber sequences in $ \DMt(K) $:

The equations presented elucidate the relationships between various mathematical structures in the study. Specifically, we have the following mappings:

\[
  \Mt( V_{k-2}(\af^{n-2}) ) \to \Mt( V_k(\af^n) ) \to Q_{k,n}  
\]
and
\[
  C_{\eta}\otimes \Mt( V_{k-2}(\af^{n-2}) )(n-1)[2n-2] \to Q_{k,n} \to \tbZ(n)[2n-1] \otimes \Mt( V_{k-2}(\af^{n-2}) )(n-1)[2n-3]
\]

In a similar vein, we can construct the splitting projections, leading us ultimately to the isomorphisms:

\begin{eqnarray*}
  & \Mt( V_k(\af^n) )  \cong  \Mt( V_{k-2}(\af^{n-2}) ) \oplus  \left( C_{\eta}\otimes \Mt( V_{k-2}(\af^{n-2}) )(n-1)[2n-2] \right) \oplus  \Mt( V_{k-2}(\af^{n-2}) )(2n-1)[4n-4] \\
  &                 \cong  Y_{k-2,n-2} \oplus  \left( C_{\eta}\otimes Y_{k-2,n-2}(n-1)[2n-2] \right) \oplus  Y_{k-2,n-2}(2n-1)[4n-4] \cong Y_{k,n}
\end{eqnarray*}

\end{proof}

\bibliographystyle{plain}
\bibliography{ref}
\end{document}